\newcommand{\K}{K\"{a}hler}
\newcommand{\CC}{{\mathbb{C}}}
\newcommand{\QQ}{{\mathbb{Q}}}
\newcommand{\PP}{{\mathbb{C} \mathbb{P}}}
\newcommand{\tr}{\operatorname{tr}}
\theoremstyle{plain}
\newtheorem{thm}{Theorem}[section]
\newtheorem{prop}[thm]{Proposition}
\newtheorem{cor}[thm]{Corollary}
\theoremstyle{definition}
\newtheorem{defn}[thm]{Definition}
\theoremstyle{remark}
\newtheorem{rem}[thm]{Remark}
\DeclareMathOperator{\rk}{rank}
\DeclareMathOperator{\codim}{Codim}
\DeclareMathOperator{\sing}{Sing}
\DeclareMathOperator{\supp}{Supp}
\DeclareMathOperator{\vol}{vol}
\DeclareMathOperator{\proj}{Proj}
\newcommand{\CM}{{\rm CM}}
\begin{document}

\title[Singularities and $K$-semistability]{Singularities and $K$-semistability}

\author{Claudio Arezzo}
\address{Abdus Salam International Center for Theoretical Physics \\
                  Strada Costiera 11 \\ 
         Trieste (Italy) and Dipartimento di Matematica \\
         Universit\`a di Parma \\ 
         Parco Area delle Scienze~53/A  \\ 
         Parma (Italy)}
\email{arezzo@ictp.it} 

\author{Alberto Della Vedova}
\address{Fine Hall, Princeton University, Princeton, NJ 08544 and
Dipartimento di Matematica \\
         Universit\`a di Parma \\ 
         Parco Area delle Scienze 53/A \\ 
         Parma (Italy)}
\email{della@math.princeton.edu}

\author{Gabriele La Nave}
\address{Department of Mathematics, University of Illinois, Urbana Il and \\ 
         Yeshiva University \\ 
         500 West 185 Street \\ 
         New York, NY}
\email{lanave@yu.edu}

\subjclass[2000]{58E11, 53C55}
\date{\today}

\begin{abstract}
In this paper we extend the notion of Futaki invariant to big and nef classes
so as to define a continuous function on the \K\ cone up to the 
boundary. We apply this concept to prove that reduced normal crossing singularities are sufficient to check $K$-semistability. 
A similar improvement on Donaldson's lower bound for Calabi energy
is given.
\end{abstract}

\maketitle

\section{Introduction}

One of the most fascinating problems in complex differential geometry is certainly the existence problem for canonical \K\ metrics in a fixed cohomology class (Einstein metrics are one important example). While, at least in the first place, one is primarily interested in studying such a problem on a smooth manifold, singular spaces almost immediately enter the scene at least for two important reasons.

On the one hand, when trying to construct such metrics by solving suitable Partial Differential Equations (such as, Monge-Amp\`ere, constant scalar curvature equation, Ricci or Calabi flows), with varying specific difficulties, one faces the questions of whether and how the solutions develop singularities. In this situations, after taking suitable geometric limits (such as Cheeger-Gromov or Gromov-Hausdorff), one is often forced to consider singular spaces.

The second reason is more subtle and is related to the Tian-Yau-Donaldson Conjecture (\cite{T95}, \cite{T97}, \cite{tianex}, \cite{Donaldson2002}) which predicts  that  the existence of special metrics is equivalent to a suitably adapted GIT stability notion of the corresponding algebraic {\em{polarized}} manifold. 

What is now believed to be the right stability notion entering this picture is the so called {\em{K-stability}} introduced by Tian (\cite{T95}, \cite{T97}) (see also Ding-Tian \cite{dt}) and later by Donaldson (\cite{Donaldson2002}), building on previous work by Futaki and Calabi. The mutual relationship between these, and other notions has been deeply investigated by Paul-Tian in \cite{pt} and some of their results will be recalled and used in our work.

Let us now recall
Donaldson's definition of $K$-stability.

\begin{defn}
\begin{enumerate}
\item
\label{defn::DF_invariant}
\noindent Let $(V,L)$ be a $n$-dimensional polarized variety or scheme. Given a one parameter subgroup $\rho: \mathbb C^* \to {\rm Aut}(V)$ with a linearization on $L$ and denoted by $w(V,L)$ the weight of the $\mathbb C^*$-action induced on $\bigwedge^{\rm top} H^0(V,L)$, we have the following asymptotic expansions as $k \gg 0$:
\begin{eqnarray}
\label{eq::asym_w(V,L^k)}h^0(V,L^k) &=& a_0 k^n + a_1 k^{n-1} + O(k^{n-2})\\
\label{eq::asym_h^0(V,L^k)} w(V,L^k) &=& b_0 k^{n+1} + b_1 k^{n} + O(k^{n-1}) 
\end{eqnarray}
The (normalized) \emph {Futaki invariant} of the action is $$ F(V,L, \rho) = \frac{b_1}{a_0}-\frac{b_0 \,a_1}{a_0^2}. $$
\item
\label{defn::TC}
A \emph{test configuration} $(X, L)\to \mathbb C$ of a polarized manifold $(M,A)$ consists of a scheme $X$ endowed with a $\mathbb C^*$-action that linearizes on a line bundle $L$ over $X$, and a flat $\mathbb C^*$-equivariant map $f: X \to \mathbb C$ (where $\mathbb C$ has the usual weight one $\mathbb C^*$-action) such that $L_0=L|_{f^{-1}(0)}$ is ample on $X_0=f^{-1}(0)$ and we have $(f^{-1}(1) , L|_{f^{-1}(1)}) \simeq (M,A^r)$ for some $r>0$. When $(M,A)$ has a $\mathbb C^*$-action $\rho: \mathbb C^* \to {\rm Aut}(M)$, a test configuration where $X = M \times \mathbb C$ and $\mathbb C^*$ acts on $X$ diagonally through $\rho$ is called \emph{product configuration}.
\item
\label{defn::semiKst}
\noindent The polarized manifold $(M,A)$ is \emph{$K$-semistable} if for each test configuration the Futaki invariant of the induced action on the central fiber $(X_0,L_0)$ is less than or equal to zero. $(M,A)$ is \emph{$K$-polystable} if moreover equality holds only in the case of a product configuration.
\end{enumerate}
\end{defn}

Even though $(M,A)$ is indeed a smooth manifold, a test configuration and its central fiber will in general be just a schemes. 
%

In order to make the total space of a test configuration (and its central fiber) less singular than a given one, it is natural to apply Mumford's semi-stable reduction Theorem (cfr. \ref{thm::ESSR}). Since the reduction
equips the central fiber with a line bundle which is merely big and nef --instead of a genuine polarization--one is forced
to extended the notion of Futaki invariant to schemes equipped with big and nef line bundles. 
It is also very natural--and useful when using degeneration arguments-- to try and do this so that the new invariant be continuous with respect to degenerations of the polarization within the K\"ahler cone. This is accomplished in Definition \ref{def::DF} and Proposition \ref{prop::DF_continuity}. It is interesting to stress that this definition shows the topological nature of this invariant.

The key point of our study is then to prove that two things may happen to the Futaki invariant
when applying Mumford's Theorem: either it jumps (and this has to happen if the initial singularities were very bad, see Corollary \ref{cor::control_Futaki})--in which case the new Futaki invariant is bigger than the starting one--
or it doesn't change (up to multiplication by the degree of the base change). This step is based on a crucial result by Ross-Thomas \cite{rt} where a similar jumping phenomenon is described for the Futaki invariant as in Donaldson's definition. When deforming the big and nef bundle on the central fiber to a genuine polarization, the above mentioned continuity immediately gives the following:

\begin{thm}
\label{genmain}
Given a test configuration $(X,L) \to \mathbb C$ for a smooth polarized manifold, then we have the following alternative:
\begin{enumerate}
\item 
\label{genmain1}
either exists a test configuration $(X',L') \to \mathbb C$ for the same polarized manifold with smooth $X'$ and whose central fibre is a reduced simple normal crossing divisor such that $d F(X_0,L_0)<F(X'_0,L'_0)$, for some $d\geq 1$, 
\item 
\label{genmain2}
or else for all $\varepsilon>0$ there is a test configuration $(X',L') \to \mathbb C$ for the same polarized manifold with smooth $X'$ and whose central fibre is a reduced simple normal crossing divisor such that $| d F(X_0,L_0) - F(X'_0,L'_0) | < \varepsilon$, for some $d\geq 1$. 
\end{enumerate}
\end{thm}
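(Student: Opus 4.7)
The plan is to apply Mumford's equivariant semistable reduction to the given test configuration, use the Ross--Thomas jumping result to locate the extended Futaki invariant of the resolved model relative to $d F(X_0,L_0)$, and finally exploit the continuity of the extended Futaki invariant (Proposition \ref{prop::DF_continuity}) to perturb the resulting big and nef polarization to an honest ample one without changing the Futaki invariant by more than $\varepsilon$.

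First I would invoke Theorem \ref{thm::ESSR}: after a base change $z\mapsto z^d$ for some integer $d\geq 1$, there is an equivariant resolution $\pi\colon X'\to X\times_{\mathbb C,d}\mathbb C$ with $X'$ smooth and central fibre $X'_0$ a reduced simple normal crossing divisor. The $\mathbb C^*$-action lifts, and $\tilde L:=\pi^*L$ is big and nef on $X'$ (and on $X'_0$), agrees with the pullback of $L$ on the generic fibre, but is typically not ample. Since $\tilde L_0$ is merely big and nef, $(X',\tilde L)$ is not yet a test configuration in the ordinary sense; however its Futaki invariant is defined via Definition \ref{def::DF}.

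Next I would use the Ross--Thomas jumping result (reformulated through Corollary \ref{cor::control_Futaki}) applied to the morphism $\pi$ combined with the base change. The base change multiplies the Futaki invariant of $(X_0,L_0)$ by $d$, while the resolution, going from singular to smooth total space, obeys the alternative that either the Futaki invariant strictly increases or it remains unchanged. This yields the dichotomy at the level of the extended invariant:
\begin{equation*}
  \text{either}\quad F(X'_0,\tilde L_0)>d\,F(X_0,L_0) \qquad \text{or}\qquad F(X'_0,\tilde L_0)=d\,F(X_0,L_0).
\end{equation*}

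To convert $(X',\tilde L)$ into a genuine test configuration I would perturb the big and nef bundle inside the \K\ cone: fix an equivariant ample line bundle $H$ on $X'$ (for instance pulled back from an equivariant projective compactification) and set $L'_\varepsilon:=\tilde L^{\otimes N}\otimes H^{\otimes \varepsilon N}$ for rational $\varepsilon>0$ and $N$ chosen so that this is an honest line bundle. For all sufficiently small $\varepsilon>0$ the class $L'_\varepsilon$ is ample, equivariant, and restricts to a power of $A$ on the generic fibre, so $(X',L'_\varepsilon)$ is a bona fide test configuration for $(M,A)$ whose central fibre remains the reduced SNC divisor $X'_0$. By Proposition \ref{prop::DF_continuity}, $F(X'_0,L'_{\varepsilon,0})\to F(X'_0,\tilde L_0)$ as $\varepsilon\to 0^+$. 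In the strict inequality case of the dichotomy, continuity preserves the strict inequality for $\varepsilon$ small enough, yielding alternative \eqref{genmain1}; in the equality case, continuity makes $|F(X'_0,L'_{\varepsilon,0})-d\,F(X_0,L_0)|$ arbitrarily small, yielding \eqref{genmain2}.

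The main obstacle I anticipate is the second step: ensuring that the Ross--Thomas jumping phenomenon, originally phrased for Donaldson's integer Futaki invariant, transfers cleanly to the extended invariant on big and nef classes, and in particular that the inequality goes in the stated direction after passing to the limiting big and nef bundle rather than approximating by ample ones. This is exactly what Corollary \ref{cor::control_Futaki} is designed to provide, but its application requires being careful that the continuity of Proposition \ref{prop::DF_continuity} is strong enough to commute with the limit used in formulating the jumping statement for $\tilde L_0$.
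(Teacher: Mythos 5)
Your overall strategy is the same as the paper's: apply the equivariant semistable reduction of Theorem \ref{thm::ESSR}, compare the extended Futaki invariant of the pulled-back (big and nef) bundle on the resolution with $d\,F(X_0,L_0)$ via the Ross--Thomas jumping statement (Proposition \ref{prop::dominate_TC}, which the paper proves directly for the extended invariant, so the ``commutation of limits'' you worry about at the end is not actually an issue --- the comparison is made between two honest extended invariants, not between limits), and then perturb the big and nef class to an ample one using the $O(1/r)$ continuity of Corollary \ref{continue}.

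There is, however, one step that fails as written: your choice of perturbation $L'_\varepsilon=\tilde L^{\otimes N}\otimes H^{\otimes \varepsilon N}$ with $H$ an arbitrary equivariant ample line bundle on $X'$. Such an $H$ has no reason to be trivial on the fibres away from $0$, so $L'_\varepsilon$ restricted to the generic fibre is $A^{rN}\otimes (H|_{M})^{\varepsilon N}$, which is in general not a power of $A$; hence $(X',L'_\varepsilon)$ is a test configuration for a \emph{different} polarized manifold, and the conclusion of the theorem (which requires a test configuration for the \emph{same} $(M,A)$) does not follow. The paper's remedy is to take the ample perturbation in the direction of the exceptional divisor: since $\beta$ is the blow-up of an invariant ideal supported over $0\in\mathbb C$, the divisor $E$ is contained in the central fibre and $L'(r)=\beta^*pr_X^*L^r(-E)$ is relatively ample for $r\gg 0$ while still restricting to $A^{r e}$ on the generic fibre. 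With this choice Corollary \ref{continue} gives $F(X'_0,L'(r)_0)=F(X'_0,(\beta^*pr_X^*L)_0)+O(1/r)$ and the dichotomy you state goes through exactly as you intend. So the gap is local and fixable, but as stated your construction does not produce an admissible test configuration.
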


An immediate corollary of the above results is then the following:

\begin{thm}\label{thm::smoothing}
Let $(X,L) \to \mathbb C$ be a test configuration for a smooth polarized manifold with $F(X_0,L_0)>0$, then there is a test configuration $(X',L') \to \mathbb C$ for the same polarized manifold with smooth $X'$, whose central fibre is a reduced simple normal crossing divisor, and  $F(X'_0,L'_0)>0$.  
\end{thm}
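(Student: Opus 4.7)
The plan is to apply Theorem \ref{genmain} directly to the given test configuration $(X,L)$ and to observe that in either of the two alternatives we can produce a test configuration $(X',L')$ with smooth total space, reduced SNC central fibre, and strictly positive Futaki invariant. The key observation is that the hypothesis $F(X_0,L_0)>0$ together with $d\geq 1$ turns both alternatives into concrete positive lower bounds on $F(X'_0,L'_0)$.

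\textbf{Case 1.} Suppose alternative \eqref{genmain1} of Theorem \ref{genmain} holds. Then we immediately obtain $(X',L')$ with the required geometric properties and
\[
F(X'_0,L'_0) \;>\; d\,F(X_0,L_0) \;\geq\; F(X_0,L_0) \;>\; 0,
\]
since $d\geq 1$ and $F(X_0,L_0)>0$. So we are done without any further work.

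\textbf{Case 2.} Suppose alternative \eqref{genmain2} holds. Here we are free to pick the tolerance $\varepsilon$. The plan is to choose $\varepsilon$ small enough compared to $F(X_0,L_0)$ so that the approximate equality $|dF(X_0,L_0)-F(X'_0,L'_0)|<\varepsilon$ still forces $F(X'_0,L'_0)>0$. Concretely, set $\varepsilon := \tfrac12 F(X_0,L_0)>0$; alternative \eqref{genmain2} then furnishes a test configuration $(X',L')$ of the required type and an integer $d\geq 1$ satisfying
\[
F(X'_0,L'_0) \;>\; d\,F(X_0,L_0) - \tfrac12 F(X_0,L_0) \;\geq\; \tfrac12 F(X_0,L_0) \;>\; 0.
\]

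\textbf{Main (non-)obstacle.} There is essentially no obstacle once Theorem \ref{genmain} is available: the theorem is designed precisely so that the sign of the Futaki invariant is preserved under passage to the semistable reduction. The only subtlety is to note that in alternative \eqref{genmain2} the integer $d$ is not prescribed in advance but appears together with $(X',L')$, so one must check that the trivial lower bound $d\geq 1$ combined with a small enough $\varepsilon$ suffices; this is immediate from the display above. Combining the two cases finishes the proof.
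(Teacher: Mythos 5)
Your proposal is correct and matches the paper's intent exactly: the paper presents this theorem as an immediate corollary of Theorem \ref{genmain}, and the case analysis you give (using $d\ge 1$ in alternative (\ref{genmain1}), and choosing $\varepsilon=\tfrac12 F(X_0,L_0)$ in alternative (\ref{genmain2})) is precisely the intended deduction.
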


Hence to check $K$-semistability of a given polarized manifold it is sufficient to restrict
to test configurations with reduced simple normal crossing central fibers.

It is of course of great interest to understand whether the above theorem can be extended to cover the
$K$-polystable case. In this direction we highlight the following partial results:

\begin{thm}\label{thm::zerofut}
Given a test configuration $(X,L) \to \mathbb C$ for a smooth polarized manifold $(M,A)$ with $F(X_0,L_0)=0$ , then:
\begin{enumerate}
\item 
\label{basech}
there exists a test configuration $(X',L') \to \mathbb C$ for the same polarized manifold with 
$F(X'_0,L'_0)=0$ and reduced central fibre;
\item
\label{zerofutnn}
if $X_{\rm non-normal}$ (i.e. the set of non-normal points of $X$) has codimension one, then there exists a smooth test configuration $(X',L') \to \mathbb C$ for the same polarized manifold with $F(X'_0,L'_0)>0$ and reduced simple normal crossing central fibre. In particular if $(M, A)$ is $K$-semistable, then $X_{\rm non-normal}$ has codimension greater than $1$.
\end{enumerate}
\end{thm}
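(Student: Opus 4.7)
Both parts rely on the toolbox assembled in the paper: Mumford's semi-stable reduction (Theorem \ref{thm::ESSR}), the Ross--Thomas-type analysis of how the Futaki invariant behaves under base change and normalization (Corollary \ref{cor::control_Futaki}), and Theorem \ref{thm::smoothing}.

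For part (\ref{basech}), starting from $X$ with $F(X_0,L_0)=0$, the natural construction is a base change $t\mapsto t^d$ with $d$ the least common multiple of the multiplicities of the irreducible components of $X_0$, followed by normalization. Base change scales the Futaki invariant by $d$, yielding a test configuration with $F = d\cdot 0 = 0$; normalization then makes the central fibre scheme-theoretically reduced. By Corollary \ref{cor::control_Futaki} the normalization step either preserves $F$ (when the non-normal locus of the base-changed model has codimension at least two) or strictly increases it (codimension one case). One must verify that the reduced model coming out of the construction can be chosen so that the second case is avoided, producing the required $X'$ with $F(X'_0,L'_0)=0$ and reduced central fibre.

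For part (\ref{zerofutnn}), the codimension one hypothesis on $X_{\mathrm{non-normal}}$ is exactly what Corollary \ref{cor::control_Futaki} needs to force a strict Futaki jump. Normalizing $X$ produces a test configuration $\bar X$ for the same $(M,A)$ --- normalization leaves the generic fibre untouched since $M$ is smooth, hence normal --- and gives
\begin{equation*}
F(\bar X_0, \bar L_0) \;>\; F(X_0, L_0) \;=\; 0.
\end{equation*}
Applying Theorem \ref{thm::smoothing} to $\bar X$ then yields a smooth test configuration $X'$ with reduced simple normal crossing central fibre and $F(X'_0,L'_0) > 0$, which is the first conclusion. The $K$-semistability statement follows as the contrapositive: such an $X'$ would directly violate Definition \ref{defn::semiKst}, so if $(M,A)$ is $K$-semistable the codimension one hypothesis on $X_{\mathrm{non-normal}}$ must fail.

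\textbf{Main obstacle.} The delicate step is part (\ref{basech}): ensuring that $F$ stays \emph{exactly} at zero through the reduction rather than merely non-negative. This amounts to a sharper form of the alternative in Theorem \ref{genmain} in the equality case, or equivalently to geometric control of the codimension of the non-normal locus produced by base change. Part (\ref{zerofutnn}), by contrast, only needs one-sided control ($F>0$), for which the ingredients already proved in the paper --- Ross--Thomas jumping and Theorem \ref{thm::smoothing} --- are directly applicable.
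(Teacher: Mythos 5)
Your treatment of part (\ref{zerofutnn}) is correct and is essentially the paper's argument: pass to the normalization, invoke the third assertion of Corollary \ref{cor::control_Futaki} to get the strict jump $F(\bar X_0,\bar L_0)>F(X_0,L_0)=0$, and feed the result into Theorem \ref{thm::smoothing}; the semistability statement is then the contrapositive.

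The genuine problem is part (\ref{basech}). You build $X'$ by base change followed by normalization, and you explicitly leave open the step that decides everything: whether the normalization can be arranged not to raise $F$. It cannot, at least not along the lines you suggest. If $X_0$ has an irreducible component $V$ of multiplicity $a\geq 2$, then near a generic point of $V$ one has $f=u^a\cdot(\mathrm{unit})$, and the fibre product $X\times_{\pi_d}\mathbb C$ (with $a\mid d$) is locally $\prod_{\zeta^a=1}(uh-\zeta s^{d/a})=0$, which is non-normal exactly along $\{u=s=0\}$, a codimension-one locus lying over $V$. So precisely when a base change is needed to kill multiplicities, the base-changed total space is non-normal in codimension one, and Proposition \ref{prop::dominate_TC} (equivalently the third assertion of Corollary \ref{cor::control_Futaki}) forces the normalization to \emph{strictly increase} the Futaki invariant: your construction yields $F(X'_0,L'_0)>0$, not $=0$. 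The paper's own (one-line) proof of (\ref{basech}) takes a different route: it performs only the base change, which multiplies $F$ by $d$ via \eqref{eq::base_change} and hence keeps it at zero, and asserts reducedness of the central fibre for the base-changed family itself, with no normalization. (That assertion itself deserves scrutiny, since the scheme-theoretic fibre of $X\times_{\pi_d}\mathbb C$ over $s=0$ is isomorphic to $X_0$; but whatever one thinks of the paper's justification, inserting a normalization, as you do, provably changes the invariant and so cannot prove the statement as written.)
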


Theorems  \ref{genmain},  \ref{thm::smoothing}  and \ref{thm::zerofut} are proved in Section $3$.

When looking for the worst test configuration, i.e. the one with the highest Futaki invariant, one is forced to introduce some normalization in order to avoid the possibility of arbitrarily enlarging the Futaki invariant (e.g., by coverings of the base). Futaki-Mabuchi \cite{fm}, Sz\'ekelyhidi \cite{Sze06} and Donaldson \cite{do2} have proposed a natural normalization which will be recalled in Section \ref{sec::applications}. Our results then imply that an optimal test configuration in this sense has only reduced simple normal crossing central fibers, or its normalized Futaki invariant can be arbitrarily approximated by the one of test configurations with only reduced simple normal crossing central fiber.
 
\begin{thm}\label{lb}
Let $\Psi(X,L)$ be Donaldson's normalized Futaki invariant (see \cite{do2} and section \ref{sec::applications}). Then
\vspace{-3mm}
 \begin{multline*} \sup \{\Psi(X,L) \mid (X,L) \mbox{ is a test configuration of (M,A)}\} = \\ =  \sup \{\Psi(X',L') \mid (X',L') \mbox{ is a \emph{smooth} test configuration} \\ \mbox{of $(M,A)$ with reduced simple normal crossing central fiber}\}. \end{multline*} 
\end{thm}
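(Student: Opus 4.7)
The inequality $\geq$ is immediate, since smooth test configurations with reduced simple normal crossing central fibre form a subfamily of all test configurations. The content is the reverse inequality $\leq$, and my plan is to prove the following statement, from which the equality of suprema follows at once: for every test configuration $(X,L)$ of $(M,A)$ and every $\varepsilon > 0$ one can produce a smooth test configuration $(X',L')$ with reduced simple normal crossing central fibre such that $\Psi(X',L') > \Psi(X,L) - \varepsilon$. The two ingredients will be Theorem \ref{genmain} and the behaviour of $\Psi$ under base change.

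The first step is to verify that $\Psi$ is invariant under the degree $d$ base change $z \mapsto z^d$ of the base $\mathbb{C}$: if $(X^{(d)},L^{(d)})$ denotes the resulting test configuration, then the $\mathbb{C}^*$-action on the central fibre is replaced by its $d$-th power, hence all the weights on $H^0(X_0, L_0^k)$ are multiplied by $d$. Consequently the Futaki invariant scales as $F(X^{(d)}_0, L^{(d)}_0) = d \, F(X_0, L_0)$, and Donaldson's norm, being built from the traceless part of those same weights, scales as $\|(X^{(d)}, L^{(d)})\| = d \, \|(X,L)\|$; thus $\Psi(X^{(d)}, L^{(d)}) = \Psi(X,L)$.

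Now apply Theorem \ref{genmain} to $(X,L)$. In case \eqref{genmain1} one obtains a smooth snc test configuration $(X',L')$ with $F(X'_0, L'_0) > d\,F(X_0,L_0) = F(X^{(d)}_0, L^{(d)}_0)$; in case \eqref{genmain2} one obtains such an $(X',L')$ with $|F(X'_0,L'_0) - d\, F(X_0, L_0)|$ arbitrarily small. To upgrade these comparisons from $F$ to $\Psi$, one needs an analogue of Theorem \ref{genmain} for Donaldson's norm: that $\|(X',L')\|$ can be compared to $\|(X^{(d)}, L^{(d)})\| = d\,\|(X,L)\|$ with a controlled error. The expected behaviour is that the norm is preserved (up to continuity) along the construction, since that construction is of the same nature as the one used for the Futaki invariant, namely semistable reduction followed by a perturbation of the polarization inside the Kähler cone. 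This control of the norm is, in my view, the main technical obstacle; I would establish it by following the scheme of the paper: first using the Ross--Thomas jump analysis \cite{rt} to monitor the norm under semistable reduction, then using the continuity of the Futaki invariant under polarization deformation (Proposition \ref{prop::DF_continuity}) as a template for the corresponding continuity of $\|\cdot\|$ along the Kähler cone. Granted this, in case \eqref{genmain1} one gets $\Psi(X',L') > \Psi(X^{(d)},L^{(d)}) = \Psi(X,L)$, while in case \eqref{genmain2} one obtains $\Psi(X',L') > \Psi(X,L) - \varepsilon$ by choosing the perturbation small enough, completing the proof.
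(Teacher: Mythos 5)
Your overall strategy is the same as the paper's: reduce to the reverse inequality, use invariance of $\Psi$ under the base change $z\mapsto z^d$, run the semistable-reduction-plus-perturbation argument of Theorem \ref{genmain}, and upgrade the comparison from $F$ to $\Psi$ by controlling Donaldson's norm. The base-change computation you give (all weights scale by $d$, hence $F$ and $\|\cdot\|$ both scale by $d$ and $\Psi$ is unchanged) is exactly the paper's.

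The one step you leave unproved --- the comparison of $\|(X',L')\|$ with $d\,\|(X,L)\|$, which you call ``the main technical obstacle'' --- is a genuine gap in your write-up, but it is disposed of in the paper by a one-line observation that you should make explicit rather than defer. The norm $\|X_0,L_0\|=\sqrt{Q/a_0-b_0^2/a_0^2}$ depends only on the \emph{leading} coefficients of the polynomials $\chi(X_0,L_0^k)$, $\tr A_k$ and $\tr A_k^2$. The Ross--Thomas computation in the proof of Proposition \ref{prop::dominate_TC} shows that the effect of $\beta$ on $\det Rf'_*((L')^m)\otimes\det f_*(L^m)^{-1}$ is concentrated in the terms of order $m^n$ and lower (the correction is supported on subschemes of codimension at least one in the central fibre), so $a_0$, $b_0$ and $Q$ --- hence the norm --- are \emph{literally unchanged} under semistable reduction, not merely controlled up to error. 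For the perturbation step one first checks that $\Psi$ is invariant under $L\mapsto L^r$ (the norm scales by $r$, $F$ is unchanged, and the prefactor $\sqrt[n]{a_0\vol(M,A)^{(n-2)/2}}$ compensates via the exponent $e$ entering $\vol(M,A)=e^{-n}a_0$), and then that the leading coefficients of $L'(r)=\beta^*pr_X^*L^r(-E)$, being intersection numbers, differ from those of $\beta^*pr_X^*L^r$ by $O(r^{-1})$ after this normalization; this is the ``straightforward exercise'' the paper alludes to. So your plan is correct and would succeed, but the missing lemma is much easier than you anticipate and should be stated as the preservation of leading coefficients rather than as an analogue of the full continuity machinery of Proposition \ref{prop::DF_continuity}.
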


Donaldson \cite{do2} proved that the number $\sup \{\Psi(X,L) \mid (X,L) \mbox{ is a test configuration of $(M,A)$}\}$ gives a lower bound for the Calabi energy in the first Chern class of $A$, and conjectures that this lower bound is exact (this has been verified by Sz\'ekelyhidi \cite{sz} for toric varieties assuming the long time existence of the Calabi flow). Thus Theorem \ref{lb} implies that even in seeking to achieve the lower bound of the Calabi energy we can restrict ourselves to test configurations with mild singularities.

In order to understand the relationship between K-stability and Minimal Model Program stability, among all birational transformations we need to understand the behavior of the Futaki invariant under very specific birational transformations: flips, flops and divisorial contractions. The first two are studied in  Proposition \ref{prop::Futakiflips}.

\vspace{.5mm}
{\small \bf Acknowledgements: }The authors would like to thank the referees whose suggestions considerately improved the presentation of the paper.

\section{Extension of Donaldson-Futaki invariant and CM-line to big and nef line bundles}

\noindent As mentioned in the introduction, the understanding of the role of singularities for
K-semistability needs a ``good" extension  of the Donaldson-Futaki invariant \cite{Donaldson2002} to the boundary of the ample cone and in particular to nef and big line bundles in such a way to have a continuity when approaching the boundary. The aim of this section is to show that the following definition achieves this goal:

\begin{defn}\label{def::DF}
Let $V$ be a projective variety or scheme endowed with a $\mathbb C^*$-action and let $L$ be a big and nef line bundle on $V$. Choosing a linearization of the action on $L$ gives a $\mathbb C^*$-representation on $\bigoplus_{j=0}^{\dim V} H^j(V,L^k)^{(-1)^j}$, where we indicate by $W^{-1}$ the dual of $W$ for a vector space $W$. We set $w(V,L^k)={\rm tr}\,A_k$, where $A_k$ is the generator of that representation. As $k \to +\infty$ we have the following classical expansion which follows easily from equivariant Riemann-Roch
$$\frac{w(V,L^k)}{\chi(V,L^k)} = F_0k + F_1 + O(k^{-1}), $$ and we define $$ F(V,L) = F_1$$ to be the \emph{Donaldson-Futaki invariant} of the chosen action on $(V,L)$.
\end{defn}

\begin{rem}
Clearly $w(V,L^k)$ is nothing but the weight of the induced $\mathbb C^*$-action on the determinant
$ \bigotimes_{j=0}^{\dim V} \det H^j(V,L^k)^{(-1)^j}$.
\end{rem}

\begin{rem}
As it is well known, when $L$ is ample $H^q(V,L^k)=0$ for $q\geq 1$, thus we recover Donaldson's definition of the Futaki invariant. In particular $F(V,L)$ is calculated by means of the induced actions on the spaces of sections $H^0(V,L^k)$ for $k \gg 0$. On the other hand, this is not possible in general if $L$ is merely nef and big. To see that, even if we assume the existence of the asymptotic expansion $h^0(V,L^k)= a_0 k^n + a_1k^{n-1} + O(k^{n-2})$ as $k \to +\infty$ (here $n=\dim V$)-- which holds for instance in the case of $L$ being also semi-ample-- we have $a_0>0$ by bigness and $h^q(V,L^k)= O(k^{n-q})$ for $q>0$ by nefness \cite[Theorem 1.4.40]{Laz2004}, so that $h^1(V,L^k)=a_1'k^{n-1} + O(k^{n-2})$ with $a_1'\geq 0$. Analogously the weights of the induced actions on $H^0(V,L^k)$ and $H^1(V,L^k)$ are given respectively by $b_0k^{n+1} + b_1k^n + O(k^{n-1})$ and $b_1'k^n+O(k^{n-1})$, thus by definition \ref{def::DF} we have $$ F(V,L^k) = \frac{a_0(b_1-b_1') - (a_1-a_1')b_0}{a_0^2} = \frac{a_0b_1 - a_1b_0}{a_0^2} - \frac{a_0b_1' - a_1'b_0}{a_0^2}.$$ Hence for such line bundles $L$ on $V$, the Donaldson-Futaki invariant can be computed by means of the induced actions on $H^0(V,L^k)$ under the additional hypothesis $h^1(V,L^k) = O(k^{n-2})$. This represents the crucial technical difference in the definition of the Futaki invariant in this paper and the one given by Ross and Thomas which takes into account just the contribution given by $H^0(V,L)$. 
\end{rem}

To show the mentioned desired properties of this invariant we recall the definition of the (refined) CM-line bundle of a family given by Paul-Tian \cite{pt}.

Let $f: X \to B$ be a family of $n$-dimensional projective schemes. When using the term ``family'' referred to $f: X \to B$, we will always mean that $f$ is a flat projective morphism and more precisely we are given an embedding $i: X \hookrightarrow \mathbb P^N \times B$ such that $f=pr_B \circ i$. Let $L=i^* \circ pr_{\mathbb P^N}^* \mathcal O_{\mathbb P^N}(1)$ be the restriction to $X$ of the obvious relatively (very) ample line bundle on $\mathbb P^N \times B$. Thanks to the relative ampleness of $L$, the comology of the fiber $H^0(X_b,L_b^k)$ is isomorphic to the fiber over $b\in B$ of the direc image $f_*(L^k)$, at least as $k \gg 0$. With this assumption, by flatness the direct image $f_*(L^k)$ is a locally free sheaf on $B$ \cite[Proposition 7.9.13]{EGAIII2}, moreover by relative ampleness we have $H^q(X_b,L_b^k)=0$ for all $q>0$ so that the fiber $f_*(L^k)_b$ is naturally isomorphic to $H^0(X_b,L_b^k)$ \cite[Theorem 12.11]{H77}. Thus we conclude that 
\begin{eqnarray}\label{eq::rank=P} \chi(X_b,L_b^k) &=& \rk f_*(L^k) \\ 
\bigotimes_{j=0}^{\dim X_b} \det H^j(X_b,L_b^k)^{(-1)^j} &=& \det f_*(L^k)|_b, \end{eqnarray} 
for all $b\in B$ and $k \gg 0$. Since $\chi(X_b,L_b^k)$ is a polynomial we have an expansion 
\begin{equation}\label{eq::rk_exp} \rk f_*(L^k) = a_0 k^n + a_1 k^{n-1} + \dots + a_n, \quad \mbox{ as } k\gg 0.\end{equation} 
Now consider the determinant of the locally free sheaf $f_*(L^k)$ for $k$ big enough. As an easy corollary of a result due to Knudsen and Mumford \cite[Proposition 4]{KM76}, we have
\begin{equation}\label{eq::KM_exp} \det f_*(L^k) = \mu_0^{k^{n+1}} \otimes \mu_1^{k^n} \otimes \dots \otimes \mu_{n+1}, \end{equation}
where $\mu_0, \dots, \mu_{n+1}$ are $\mathbb Q$-line bundles on $B$. Combining \eqref{eq::KM_exp} and \eqref{eq::rk_exp}, always for $k\gg 0 $, we get the asymptotic expansion $$ \det f_*(L^k) ^\frac{1}{\rk (X,L^k)} = \mu_0^\frac{k}{a_0} \otimes \left(\mu_1^{a_0} \otimes \mu_0^{-a_1} \right)^\frac{1}{a_0^2} \otimes O(\frac{1}{k}).$$

Up to the factor $-2 a_0 (n+1)!$ the CM-line associated to the family $(X,L) \to B$ defined by Paul and Tian \cite{pt} is the $\mathbb Q$-line bundle on $B$ given by the degree zero term of the expansion above $$ \lambda_{\CM} (X,L) = \left(\mu_1^{a_0} \otimes \mu_0^{-a_1} \right)^\frac{1}{a_0^2}.$$

It is an easy matter to verify that $\lambda_{\CM}(X,L^m) \simeq \lambda_\CM(X,L)$, thus $\lambda(X,L)$ is defined also when $L$ si merely relatively ample.

\bigskip 

Next we want to consider line bundles on $X$ which are not necessarily relatively ample. 

As above, let $f:X \to B$ be a family of $n$-dimensional projective schemes with $n \geq 1$ and let $L$ be a line bundle on $X$. Since $f$ is projective and $L$ can be considered as a perfect complex of sheaves on $X$ supported in degree zero, then the Euler characteristic of $L^k$ restricted to a fiber of $f$ is independent of the chosen fiber and is equal to the rank $\rk Rf_*(L^k)$ of the derived push-forward of $L^k$, thus we have the polynomial expansion 
\begin{equation}\label{eq::rank_exp} \rk Rf_*(L^k) = a_0k^n + a_1k^{n-1} + \dots + a_n, \end{equation} 
with $a_i \in \mathbb Q$. In the following we will be interested mainly in line bundles for which we the term $a_0$ in the polynomial expansion above in non-zero. It is a standard fact that for instance this hypothesis is verified when $L$ is relatively ample or merely relatively big and nef.

Analogously, the determinant $\det Rf_* (L^k)$ of the derived push-forward of $L^k$ has a \emph{polynomial} expansion in terms of some fixed line bundles on the base $B$. More precisely the following holds \cite[Proposition 4]{KM76}:

\begin{thm}[Knudsen-Mumford]\label{thm::Knu-Mum} 
There are line bundles $\nu_i$ on $B$, depending on $f$ and $L$, such that $$ \det Rf_* (L^k) = \nu_0^{\binom{k}{n+1}} \otimes \nu_1^{\binom{k}{n}} \otimes \dots \otimes \nu_{n+1}. $$ 
\end{thm}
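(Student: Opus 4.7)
The plan is to establish, for any line bundle $L$ on $X$, that the assignment $\phi_L: k \mapsto \det Rf_*(L^k) \in \Pic(B)$ is the restriction to $\ZZ$ of a polynomial of degree at most $n+1$ in $k$; from this the stated formula follows by expanding $\phi_L(k)$ in the binomial basis $\binom{k}{0}, \binom{k}{1}, \dots, \binom{k}{n+1}$ and taking the $\nu_i$ to be the iterated finite differences $(\Delta^i \phi_L)(0)$, which are honest line bundles on $B$ since $\phi_L(k) \in \Pic(B)$ for every integer $k$. I would proceed by induction on the relative dimension $n$, the essential tool being the additivity of the determinant of cohomology: for any short exact sequence $0 \to F' \to F \to F'' \to 0$ of $B$-flat coherent sheaves on $X$ (and more generally for distinguished triangles of perfect complexes on $X$), there is a canonical isomorphism $\det Rf_*(F) \simeq \det Rf_*(F') \otimes \det Rf_*(F'')$. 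This is the mechanism by which the statement descends from relative dimension $n$ to relative dimension $n-1$.

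The base case $n=0$ is direct: then $f$ is finite flat, $Rf_*(L^k) = f_*(L^k)$ is locally free of constant rank, and the relative norm gives $\det f_*(L^k) \simeq \det f_*(\mathcal{O}_X) \otimes \mathrm{Nm}_f(L)^{\otimes k}$, which is linear in $k$. For the inductive step, I would first reduce to the case in which $L$ is relatively very ample, by writing $L = M \otimes N^{-1}$ with $M, N$ both relatively very ample (taking $M = L \otimes A^m$ and $N = A^m$ for a fixed relatively ample $A$ and $m \gg 0$), and verifying that whenever the polynomial property holds for the functions associated to two line bundles it holds for their tensor product and inverse: this uses additivity applied to Koszul-type resolutions combined with the projection formula. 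With $L$ relatively very ample, the next step is to produce a section $s \in H^0(X,L)$ whose zero locus $D \subset X$ is a Cartier divisor, $B$-flat, projective of relative dimension $n-1$ over $B$. The exact sequence $0 \to L^{k-1} \to L^k \to L^k|_D \to 0$ combined with additivity then yields the finite-difference relation
$$ \phi_L(k) - \phi_L(k-1) \;=\; \det R(f|_D)_*(L^k|_D) \;=\; \phi_{L|_D}(k), $$
so that if by inductive hypothesis $\phi_{L|_D}$ is polynomial of degree at most $n$, then $\phi_L$ is polynomial of degree at most $n+1$.

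The main obstacle will be producing the $B$-flat relative divisor $D$ uniformly in $B$: relative Bertini-type arguments over a general scheme $B$ are delicate, and in practice I would either argue locally on $B$, or pass to an appropriate finite flat cover (under which the polynomial structure is preserved up to a bounded $\QQ$-denominator), or alternatively bypass divisors entirely by following Knudsen--Mumford's original functorial approach: work on the derived category of perfect complexes and use the universal properties of the derived determinant to show directly that $k \mapsto \det Rf_*(L^k)$ must be polynomial in $k$ of degree at most $n+1$. In either formulation the conclusion is the same, and integrality of the $\nu_i$ in the binomial expansion follows automatically from the fact that $\phi_L(k)$ lies in $\Pic(B)$ for every $k \in \ZZ$.
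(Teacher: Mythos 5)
First, a remark on scope: the paper offers no proof of this statement --- it is quoted verbatim from Knudsen--Mumford \cite[Proposition 4]{KM76} --- so your attempt can only be measured against that source rather than against an argument internal to the paper. Your overall strategy (Newton's forward-difference formula in the abelian group $\Pic(B)$, additivity of the determinant of cohomology along exact sequences, and a d\'evissage that lowers the fibre dimension by restricting to the zero divisor of a section of $L$) is indeed the Knudsen--Mumford strategy, and your observation that integrality of the $\nu_i$ is automatic once one knows $\phi_L(k)\in\Pic(B)$ for every integer $k$ and that the $(n+2)$-nd finite difference vanishes is correct.

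However, the induction as you have set it up does not close, and the remedies you offer for the obstacle you rightly identify do not work. (i) Your inductive hypothesis concerns flat polarized families of relative dimension $n-1$, but the divisor $D$ produced in the inductive step need not be flat over $B$, its relative dimension over $B$ can jump, and a global section $s\in H^0(X,L)$ that is a non-zero-divisor need not exist when $B$ is not affine. The induction that actually closes --- the one Knudsen--Mumford run, and the one this paper itself invokes via ``property $Q_{(r)}$'' in the proof of Proposition \ref{prop::dominate_TC} --- is on the relative dimension of the support of an auxiliary coherent sheaf $\mathcal F$ (here the cokernel of multiplication by $s$), showing that $\det Rf_*(\mathcal F\otimes L^k)$ is polynomial of degree at most $r+1$ whenever $\supp\mathcal F$ has relative dimension at most $r$ over $B$; in that formulation the cokernel automatically has smaller support and no flatness or global existence of $D$ is required. (ii) ``Argue locally on $B$'' cannot repair the Bertini problem: polynomiality of a $\Pic(B)$-valued function is not a condition checkable locally on $B$, since every line bundle is locally trivial and hence local triviality of $\Delta^{n+2}\phi_L$ is vacuous. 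What is needed is a \emph{canonical} trivialization of $\Delta^{n+2}\phi_L$, independent of the local choice of $s$ and therefore glueable; producing it is precisely the content of Knudsen--Mumford's $\det$--$\mathrm{Div}$ formalism and is the technical heart of the proof, not a detail to be deferred. (iii) Passing to a finite flat cover ``up to a bounded $\QQ$-denominator'' would at best yield the $\QQ$-line bundles $\mu_i$ of \eqref{eq::det_exp} --- the weaker statement the paper actually uses downstream --- not the integral $\nu_i$ asserted here. (iv) Your reduction to relatively very ample $L$ via $L=M\otimes N^{-1}$ requires the two-variable statement that $(k,l)\mapsto\det Rf_*(M^k\otimes N^l)$ is polynomial of bounded total degree; the two one-variable statements for $M$ and $N$ separately do not imply it, so the induction must be carried out in several variables from the outset.
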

This clearly implies the existence of $\mathbb Q$-line bundles $\mu_i$ on $B$ such that \begin{equation}\label{eq::det_exp} \det Rf_* (L^k) = \mu_0^{k^{n+1}} \otimes \mu_1^{k^n} \otimes \dots \otimes \mu_{n+1}. \end{equation}

In order to define the CM-line bundle of the given family, consider the following expansion coming from \eqref{eq::det_exp} and \eqref{eq::rank_exp} as $k \to +\infty$ 
\begin{equation}\label{eq::det^(1/rk)} \det Rf_* (L^k) ^{\frac{1}{\rk Rf_* (L^k)}} = \mu_0^{\frac{k}{a_0}} \otimes \left( \mu_1^{a_0} \otimes \mu_0^{-a_1}\right)^\frac{1}{a_0^2} \otimes O(\frac{1}{k}). \end{equation}

\begin{defn}\label{defn::DFext}
In the situation above, the CM-line associated to the family $(X,L)$ is the $\mathbb Q$-line bundle on $B$ given by $$ \lambda_{\CM} (X,L) = \left( \mu_1^{a_0} \otimes \mu_0^{-a_1}\right)^\frac{1}{a_0^2}.$$
\end{defn}

\begin{rem}
Cleary, $\lambda_\CM(X,L)$ depends on the morphism $f$ and the base $B$ as well. If it is not clear from the context, we shall denote the CM-line bundle by $\lambda_\CM(X/B,L)$.
\end{rem}

We collect in the next proposition the main properties of the CM-line bundle.

\begin{prop}\label{prop::CM_properties}
In the situation above we have
\begin{enumerate}
\item \label{item::hom} $\lambda_\CM(X,L^r)=\lambda_\CM(X,L)$ for all $r>0$,
\item \label{item::nat} if $\Lambda$ is a line bundle on $B$, then $\lambda_\CM(X,L\otimes f^*\Lambda) = \lambda_\CM(X,L)$,
\item \label{item::fiber} if $f': X' \to B$ is another flat family endowed with a relatively ample line bundle $L'$, then $$ \lambda_\CM (X \times_B X', L\boxtimes L') = \lambda_\CM(X,L) \otimes \lambda_\CM(X',L'),$$
\item \label{item::base} if $\phi:B' \to B$ is flat and 
\begin{equation*}
\xymatrix{ 
X \times_B B' \ar[d]^g \ar[r]^p & X \ar[d]^f \\
B' \ar[r]^\phi & B }
\end{equation*}   
is the base change induced by $\phi$, then $$\lambda_\CM(X \times_B B'/B', p^*L) = \phi^*\lambda_\CM(X/B,L).$$
\item \label{item::sbir}If $(X',L')\to B$ is another family and $\xi:X \to X'$ is a small {\begin{footnote} {Recall that a birational morphism $f:X\to Y$ is said to be small if, when denoting its exceptional locus by $Ex(f)$, one has that $codim Ex(f) \leq 2$.} \end{footnote}}  birational (regular) morphism such that $\xi^*(L')=L$ and the diagram 
\begin{equation*}
\xymatrix{ 
X \ar[d]^f \ar[r]^\xi & X' \ar[dl]^{f'} \\
B & }
\end{equation*} is commutative, then $\lambda_\CM(X,L) = \lambda_\CM(X',L').$
\end{enumerate} 
\end{prop}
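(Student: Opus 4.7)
My plan is to verify each of the five items by direct manipulation of the Knudsen-Mumford expansion \eqref{eq::det_exp} and the rank expansion \eqref{eq::rank_exp}, combined with standard properties of the derived pushforward. Items (1) through (4) reduce to bookkeeping once the correct functorial identity is identified, while item (5) is the only one that requires a geometric argument.

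For (1), I would substitute $L^r$ for $L$ and re-expand in the variable $k$. The rank polynomial becomes $\rk Rf_*(L^{rk}) = (a_0 r^n) k^n + (a_1 r^{n-1}) k^{n-1} + \cdots$, while the Knudsen-Mumford expansion gives new coefficients $\mu_0^{r^{n+1}}$ and $\mu_1^{r^n}$. A quick check shows that the powers of $r$ in the numerator and in $a_0^2$ cancel out, yielding the same $\mathbb Q$-line bundle. For (2), the projection formula gives $Rf_*(L^k \otimes f^*\Lambda^k) = Rf_*(L^k) \otimes \Lambda^k$; taking determinants and using $\det(V\otimes \Lambda^k) = \det V \otimes \Lambda^{k\cdot \rk V}$, the new Knudsen-Mumford line bundles become $\mu_0 \otimes \Lambda^{a_0}$ and $\mu_1 \otimes \Lambda^{a_1}$, and the combination $\mu_1^{a_0} \otimes \mu_0^{-a_1}$ kills the $\Lambda$-contribution.

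For (3), the relative ampleness of $L'$ (and flatness of $f, f'$) lets me invoke flat base change and Künneth to identify $R(f\circ \pi_1)_*((L\boxtimes L')^k)$ with the tensor product $Rf_*(L^k)\otimes_{\mathcal O_B} Rf'_*(L'^k)$, where $\pi_1, \pi_2$ are the two projections from $X\times_B X'$. The rank is then multiplicative and the determinant satisfies $\det(V\otimes W) = \det(V)^{\rk W}\otimes \det(W)^{\rk V}$; collecting the leading and sub-leading terms in $k$ and simplifying produces the additivity formula. For (4), flat base change gives $\phi^* Rf_*(L^k) = Rg_*(p^*L^k)$, so both the rank polynomial and the $\mu_i$ simply pull back, and hence so does $\lambda_\CM$.

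The delicate step is (5). Here I would use the projection formula $R\xi_*(L^k) = R\xi_*(\xi^*L'^k) = L'^k \otimes R\xi_*\mathcal O_X$, so that $Rf_*(L^k) = Rf'_*(L'^k \otimes R\xi_*\mathcal O_X)$. Because $\xi$ is small birational and (implicitly) the target is normal, $\xi_*\mathcal O_X = \mathcal O_{X'}$, and the higher direct images $R^i\xi_*\mathcal O_X$ are coherent sheaves supported on the exceptional locus $Ex(\xi)$, which has codimension at least $2$ in $X'$. Therefore the fiberwise Euler characteristic $\chi(X'_b, L'^k_b \otimes R^i\xi_*\mathcal O_X|_{X'_b})$ grows like a polynomial of degree at most $n-2$ in $k$, and the same is true for the associated line bundles appearing in the Knudsen-Mumford expansion. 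Consequently the contributions of $R^i\xi_*\mathcal O_X$ for $i>0$ cannot modify the coefficients $a_0, a_1, \mu_0, \mu_1$, and the CM-lines of the two families coincide. The main obstacle I anticipate is making the codimension argument rigorous at the level of the Knudsen-Mumford line bundles $\mu_j$ (rather than just at the level of Euler characteristics), which will require a careful filtration argument or a spectral sequence reducing the problem to sheaves supported in small-enough codimension.
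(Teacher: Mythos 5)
Your treatment of items (1), (2) and (4) is exactly the paper's: re-expansion in $k$, the projection formula together with $\det(F\otimes\Lambda)=\det F\otimes\Lambda^{\rk F}$, and flat base change $Rg_*(p^*L^k)=\phi^*Rf_*(L^k)$, respectively. For item (3) the paper simply cites Fine--Ross, whereas your K\"unneth argument (legitimate since $Rf'_*(L'^k)$ is locally free for $k\gg 0$ by relative ampleness of $L'$, so the derived tensor product is honest and $\det(V\otimes W)=\det V^{\rk W}\otimes\det W^{\rk V}$ applies) gives a self-contained proof; that is a small net gain. For item (5) your strategy coincides with the paper's, but two points deserve attention. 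First, you invoke normality of $X'$ to get $\xi_*\mathcal O_X=\mathcal O_{X'}$; normality is not among the hypotheses, and it is also unnecessary: the paper instead uses the triangle $0\to\mathcal O_{X'}\to R\xi_*\mathcal O_X\to Q\to 0$ and only needs that $\codim\supp Q\geq 2$, which follows from smallness because $\xi$ is an isomorphism off a set of codimension at least two (the degree-zero part of $Q$, i.e.\ the cokernel of $\mathcal O_{X'}\to\xi_*\mathcal O_X$, is likewise supported there). Second, the obstacle you flag---promoting the codimension bound from Euler characteristics to the Knudsen--Mumford line bundles---is genuine and is resolved exactly as you suspect, by \cite[Theorem 4]{KM76}: a coherent sheaf $\mathcal F$ on $X'$ with $\dim\supp\mathcal F\leq n-1$ satisfies property $Q_{(r)}$ with $r\leq n-2$ (generic fibers of $\supp\mathcal F\to B$ have dimension $\leq n-2$, special fibers $\leq n-1=(n-2)+\mathrm{depth}$), so $\det Rf'_*(\mathcal F\otimes L'^k)$ has degree at most $r+1\leq n-1$ in $k$. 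Note this bound is $n-1$, not the $n-2$ you state for the determinant (only the rank is $O(k^{n-2})$), but either bound leaves $\mu_0$, $\mu_1$, $a_0$, $a_1$ untouched, so the conclusion $\lambda_\CM(X,L)=\lambda_\CM(X',L')$ stands; this is precisely the mechanism the paper spells out later in the proof of Proposition \ref{prop::dominate_TC}.
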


\begin{proof}
Assertion \ref{item::hom} is obvious from \eqref{eq::det^(1/rk)}. Assertions \ref{item::nat} and \ref{item::fiber} are proved in \cite{fr}, but \ref{item::nat} follows readily from \eqref{eq::det^(1/rk)} and the fact that:
$$ \det Rf_* (L^k \otimes f^*\Lambda^k) = \det Rf_*(L^k) \otimes \Lambda^{k\,\rk Rf_*(L^k)},$$
where we used projection formula and the identity $\det (F \otimes A) = \det F \otimes A^{\rk F}$, for any vector bundle $F$ and line bundles $A$ on $B$. 
In order to prove (\ref{item::base}) we notice that \cite[Proposition 9.3]{H77} implies $ Rg_*(p^*L^k) = \phi^* Rf_*(L^k) $, whence:
 $$\det Rg_* (p^*L^k)^\frac{1}{\rk Rg_* (p^*L^k)} = \phi^* \det Rf_*(L^k)^\frac{1}{\rk Rf_*(L^k)},$$ and the thesis follows.

Finally, in the situation of (\ref{item::sbir}), having recalled that $\xi$ is called small if its exceptional locus is of codimension at least two, by projection formula we get:
$$ Rf_* (L^k) = R(f'\circ \xi)_* \left( \xi^*(L')^k \right) = R f'_* \left( R\xi_*(\mathcal O_X) \otimes (L')^k\right).$$ 
Now consider the exact sequence:
 $$ 0 \to \mathcal O_{X'} \to R\xi_*(\mathcal O_X) \to Q \to 0, $$ where $\codim \supp Q \geq 2$ thanks to the smallness of $\xi$. Thus, after tensoring by $(L')^k$ and taking the derived direct image via $f'$, thanks to the additivity properties of $\det$ and $\rk$ we get:
  $$ \det Rf'_*\left( (L')^k\right) ^ \frac{1}{\rk Rf'_*\left( (L')^k\right)} = \det Rf_*\left( L^k \right) ^ \frac{1}{\rk Rf_*\left( L^k\right)} \otimes O\left( \frac{1}{k}\right)$$ and the statement readily follows from the definition of $\CM$-line bundle. 
\end{proof}


Moreover, the CM-line bundle has a sort of continuity property w.r.t. the line bundle $L$. More precisely the following holds:

\begin{prop}\label{prop::CM_continuity}
Let $L$ and $N$ be two line bundles on $X$ and suppose $L$ relatively big and nef as above and $N$ relatively ample w.r.t $f: X \to B$. We have $$ \lambda_\CM(X,L^r\otimes N) = \lambda_\CM(X,L) \otimes O(\frac{1}{r}) \qquad \mbox{ as } r \to \infty.$$
\end{prop}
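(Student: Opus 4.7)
The plan is to expand both $\rk Rf_*((L^r \otimes N)^k)$ and $\det Rf_*((L^r \otimes N)^k)$ as polynomials in the two ``degrees'' $a = rk$ and $b = k$, and then track the dependence on $r$ separately from the dependence on $k$. Write $M_r := L^r \otimes N$; since $L$ is nef and $N$ is relatively ample, $M_r$ is relatively ample for every $r \geq 0$, so the standard expansion \eqref{eq::det_exp} applies to each $M_r$ and the task is to make the $r$-dependence explicit.

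First I would establish a bivariate version of Theorem \ref{thm::Knu-Mum}: there exist $\mathbb Q$-line bundles $\tilde\mu_{ij}$ on $B$, for $0 \leq i+j \leq n+1$, such that
$$\det Rf_*(L^a \otimes N^b) = \bigotimes_{i+j \leq n+1} \tilde\mu_{ij}^{a^i b^j}$$
for all $a,b$ sufficiently large. This can be obtained either by iterating the one-variable Knudsen--Mumford theorem or by applying the universal polynomiality argument of \cite{KM76} directly to the two-parameter family of line bundles $L^a \otimes N^b$ and converting from the binomial basis to the monomial basis as in the derivation of \eqref{eq::det_exp}. In parallel, asymptotic Riemann--Roch produces rationals $c_{ij}$ with $i+j \leq n$ such that $\rk Rf_*(L^a \otimes N^b) = \sum_{i+j \leq n} c_{ij} a^i b^j$.

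Substituting $(a,b) = (rk,k)$ and collecting powers of $k$ then identifies the Knudsen--Mumford data attached to $M_r$ as
$$\mu_0^{(r)} = \bigotimes_{i+j=n+1} \tilde\mu_{ij}^{r^i}, \qquad \mu_1^{(r)} = \bigotimes_{i+j=n} \tilde\mu_{ij}^{r^i},$$
$$a_0^{(r)} = \sum_{i+j=n} c_{ij}\, r^i, \qquad a_1^{(r)} = \sum_{i+j=n-1} c_{ij}\, r^i.$$
Specialising the bivariate expansion to $b=0$ recovers the expansion for $L$ alone, so $\tilde\mu_{n+1,0}$ and $\tilde\mu_{n,0}$ are exactly the line bundles $\mu_0, \mu_1$ of \eqref{eq::det_exp} associated to $(X,L)$, while $c_{n,0}$ and $c_{n-1,0}$ are the coefficients $a_0, a_1$ from \eqref{eq::rank_exp}. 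Bigness of $L$ ensures $c_{n,0} = a_0 > 0$.

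The final step is to substitute into Definition \ref{defn::DFext}:
$$\lambda_\CM(X, M_r) = \left((\mu_1^{(r)})^{a_0^{(r)}} \otimes (\mu_0^{(r)})^{-a_1^{(r)}}\right)^{1/(a_0^{(r)})^2}.$$
Both $(\mu_1^{(r)})^{a_0^{(r)}}$ and $(\mu_0^{(r)})^{a_1^{(r)}}$ have a unique leading contribution of order $r^{2n}$, coming from the pairs $(i,j)=(n,0)$ and $(i,j)=(n+1,0)$ respectively, whilst $(a_0^{(r)})^2$ has leading term $c_{n,0}^2 r^{2n}$. Pulling out this common factor $r^{2n}$ and expanding the remainder geometrically in $1/r$ yields $\lambda_\CM(X,M_r) = \lambda_\CM(X,L) + O(1/r)$, which is the desired estimate. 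The main obstacle is the bivariate Knudsen--Mumford decomposition with polynomial structure in both variables; once that is in hand, the rest is exactly the same kind of manipulation of asymptotic expansions already carried out in \eqref{eq::det^(1/rk)}.
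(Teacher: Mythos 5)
Your argument is correct, but it follows a genuinely different route from the paper's. The paper first uses Proposition \ref{prop::CM_properties} (\ref{item::hom}) and (\ref{item::nat}) to reduce to the case where $N$ is very ample, then chooses $k$ general sections $\sigma_1,\dots,\sigma_k$ of $N$ and uses the resulting Koszul-type exact sequence to write $\rk Rf_*\bigl((L^r\otimes N)^k\bigr)$ and $\det Rf_*\bigl((L^r\otimes N)^k\bigr)$ as the corresponding data for $L^{rk}$ corrected by terms supported on the intersections $Z_{i_0}\cap\dots\cap Z_{i_\ell}$; the order in $r$ of each correction is then controlled by the codimension of its support, and only the codimension-one terms $\rho_0^{r^nk^{n+1}}$ and $c_0r^{n-1}k^n$ survive to the relevant order. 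You instead invoke a bivariate Knudsen--Mumford expansion $\det Rf_*(L^a\otimes N^b)=\bigotimes_{i+j\le n+1}\tilde\mu_{ij}^{a^ib^j}$ together with the two-variable Snapper polynomial for the rank, substitute $(a,b)=(rk,k)$, and read off the leading $r^{2n}$ terms. Your route is cleaner and more uniform: it needs no very-ampleness reduction and no general-position argument, and it makes the $O(1/r)$ error transparent; its cost is the bivariate polynomiality of $\det Rf_*$, which is not literally Theorem \ref{thm::Knu-Mum} as quoted but is standard (it is the statement underlying Deligne pairings and does follow from the methods of \cite{KM76}, or from iterating the one-variable theorem plus an interpolation argument in $\Pic(B)\otimes\mathbb Q$). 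Two small points to tighten: the specialization at $b=0$ used to identify $\tilde\mu_{n+1,0}$ and $\tilde\mu_{n,0}$ with $\mu_0$ and $\mu_1$ is legitimate only because the Knudsen--Mumford identity holds for \emph{all} integer exponents rather than just large ones, so you should say so; and your final formula should read $\lambda_\CM(X,M_r)=\lambda_\CM(X,L)\otimes O(1/r)$ in the multiplicative (tensor) notation of the statement rather than with a $+$.
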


\begin{proof} By assertions \ref{item::hom} and \ref{item::nat} of proposition \ref{prop::CM_properties} we have $$ \lambda_\CM(X,L^r\otimes N) = \lambda_\CM(X,L^{sr} \otimes N^s \otimes f^*\Lambda) $$ for all $s>0$ and any line bundle $\Lambda$ on $B$. In particular, taking $\Lambda$ sufficiently ample, thanks to \cite[Proposition 1.7.10]{Laz2004} we may assume without loss of generality $N$ to be very ample on $X$. For each $k \gg 0$, let $\sigma_1, \dots, \sigma_k \in H^0(X,N)$ be general sections. Denoting by $Z_i$ the null scheme of $\sigma_i$, we have the following exact sequence (where the first map is given by multiplication by $\sigma_1 \otimes \dots \otimes \sigma_k$):
 \begin{multline*} 0 \to L^{rk} \to (L^r\otimes N)^k \to \bigoplus_{i=1}^k (L^r\otimes N)^k \otimes \mathcal O_{Z_i} \to \bigoplus_{1 \leq i_0 < i_1 \leq k} (L^r\otimes N)^k \otimes \mathcal O_{Z_{i_0} \cap Z_{i_1}} \to \dots \\ \dots \to \bigoplus_{1 \leq i_0 < \dots < i_n \leq k} (L^r\otimes N)^k \otimes \mathcal O_{Z_{i_0} \cap \dots \cap Z_{i_n}} \to 0,\end{multline*}
 whence:
  \begin{multline*} \rk Rf_*\left( L^{rk}\otimes N^k \right) = \rk Rf_* (L^{rk}) + \\ + \sum_{\ell=0}^{n} (-1)^\ell \rk Rf_*\left( \bigoplus_{1 \leq i_0 < \dots < i_\ell \leq k} (L^r\otimes N)^k \otimes \mathcal O_{Z_{i_0} \cap \dots \cap Z_{i_\ell}}\right) \end{multline*}
and:
 \begin{multline*} \det Rf_*\left( L^{rk}\otimes N^k \right) = \det Rf_* (L^{rk}) \otimes \\ \otimes \bigotimes_{\ell=0}^{n} \left( \det Rf_*\left( \bigoplus_{1 \leq i_0 < \dots < i_\ell \leq k} (L^r\otimes N)^k \otimes \mathcal O_{Z_{i_0} \cap \dots \cap Z_{i_\ell}}\right)\right)^{(-1)^\ell}. \end{multline*}
Since $$ \rk Rf_*\left(\bigoplus_{i_0=0}^k  (L^r\otimes N)^k\otimes \mathcal O_{Z_{i_0}}\right) = c_0r^{n-1} k^n + O(r^{n-2}),$$ $$ \rk Rf_*\left(\bigoplus_{1 \leq i_0 \leq \dots \leq i_\ell \leq k} (L^r\otimes N)^k\otimes \mathcal O_{Z_{i_0}\cap \dots \cap Z_{i_\ell}}\right) = O(r^{n-2}) \qquad \mbox{for all }\ell \geq 1,$$
and analogously: 
$$ \det Rf_*\left(\bigoplus_{i_0=0}^k  (L^r\otimes N)^k\otimes \mathcal O_{Z_{i_0}}\right) = \rho_0^{r^n k^{n+1}} \otimes O(r^{n-1}),$$ $$ \det Rf_*\left(\bigoplus_{1 \leq i_0 \leq \dots \leq i_\ell \leq k} (L^r\otimes N)^k\otimes \mathcal O_{Z_{i_0}\cap \dots \cap Z_{i_\ell}}\right) = O(r^{n-1}) \qquad \mbox{for all }\ell \geq 1,$$
we have (here we get expansions for $\rk Rf_*(L^{rk})$ and $\det Rf_*(L^{rk})$ from \eqref{eq::rank_exp} and \eqref{eq::det_exp}) 
$$ \rk Rf_*(L^{rk}\otimes N^k) = (a_0r^n + c_0 r^{n-1})k^n + a_1 r^{n-1}k^{n-1} + O(r^{n-2}), $$ 
$$ \det Rf_*(L^{rk}\otimes N^k) = \left(\mu_0^{r^{n+1}} \otimes \rho_0^{r^n}\right)^{k^{n+1}} \otimes \mu_1^{r^nk^n} \otimes O(r^{n-1}). $$
Thus:
\begin{eqnarray*} \lambda_\CM(X,L^r\otimes N) &=& \left( \mu_1^{a_0r^{2n}} \otimes 
\mu_0^{-a_1r^{2n}} \otimes O(r^{2n-1}) \right)^\frac{1}{(a_0r^n +O(r^{n-1}))^2} \\ &=& \left( \mu_1^{a_0} \otimes \mu_0^{-a_1} \right)^\frac{1}{a_0^2} \otimes O\left(\frac{1}{r}\right), \end{eqnarray*} and we are done. 
\end{proof}


\section{Applications}\label{sec::applications}

In this section we suppose that the polarized family $f: (X,L) \to B$ of the previous section is a test configuration for a smooth manifold as defined by Donaldson \cite{Donaldson2002}. This means that $B=\mathbb C$ and we are given a $\mathbb C^*$-action on $X$ that linearizes to $L$ and covers the standard action on $\mathbb C$, making $f$ an equivariant map. Moreover the fiber $X_t=f^{-1}(t)$ is smooth for all $t \neq 0$ (see Definition \ref{defn::TC}). In this situation the expansion \eqref{eq::det_exp} holds in the sense of linearized $\mathbb Q$-line bundles, thus the CM-line bundle $\lambda_\CM(X,L)$ comes equipped with a linearization. Moreover, Proposition \ref{prop::CM_properties} holds, \emph{mutatis mutandis}, in the sense of linearized line bundles; in particular, property \ref{item::nat} implies that the linearization on $\lambda_\CM(X,L)$ is independent of the one chosen on $L$.
The central fiber $(X_0,L_0) = (f^{-1}(0), L|_{f^{-1}(0)})$ is equipped with a $\mathbb C^*$-action, since it lies over the fixed point $0 \in \mathbb C$. In case of $L$ ample, the relation between the CM-line bundle and the Donaldson-Futaki invariant $F(X_0,L_0)$ is given by the following \cite{pt}

\begin{prop}[Paul-Tian]
The weight of the $\mathbb C^*$-action induced on the fiber of $\lambda_\CM(X,L)$ over $0\in \mathbb C$ equals the Donaldson-Futaki invariant $F(X_0,L_0)$ of the central fiber.
\end{prop}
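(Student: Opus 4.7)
I want to read off the weight of the $\mathbb{C}^*$-action on the fiber at $0$ of $\lambda_{\CM}(X,L)$ directly from the Knudsen--Mumford expansion \eqref{eq::det_exp}, and then match it, coefficient by coefficient, with the constant term in the expansion of $w(X_0,L_0^k)/\chi(X_0,L_0^k)$ that defines $F(X_0,L_0)$.

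\textbf{Step 1: identification at the central fiber.} Because $f:X\to\mathbb{C}$ is flat and projective with $L$ relatively ample, for $k\gg 0$ one has $R^qf_*(L^k)=0$ for $q\geq 1$, and $f_*(L^k)$ is locally free with formation commuting with base change. Restricting to $0\in\mathbb{C}$ yields a $\mathbb{C}^*$-equivariant isomorphism $f_*(L^k)|_0 \simeq H^0(X_0,L_0^k)$, hence
\[
\det f_*(L^k)\big|_0 \;\simeq\; \det H^0(X_0,L_0^k)
\]
as one-dimensional $\mathbb{C}^*$-representations. The weight of the right-hand side is the trace of the generator on $H^0(X_0,L_0^k)$, which in the ample case is exactly $w(X_0,L_0^k)$ from Definition \ref{def::DF} (only $j=0$ contributes).

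\textbf{Step 2: reading off the weight from Knudsen--Mumford.} In the test configuration setting the expansion \eqref{eq::det_exp} holds as an identity of linearized $\mathbb{Q}$-line bundles on $\mathbb{C}$, so let $\omega_i\in\mathbb{Q}$ denote the weight of the induced action on the fiber of $\mu_i$ over $0$. Then the weight on $\det f_*(L^k)|_0$ is the polynomial $\omega_0 k^{n+1}+\omega_1 k^n+\cdots+\omega_{n+1}$. Combining with the Hilbert polynomial $\chi(X_0,L_0^k)=a_0 k^n+a_1 k^{n-1}+O(k^{n-2})$ and performing a routine Taylor expansion gives
\[
\frac{w(X_0,L_0^k)}{\chi(X_0,L_0^k)}=\frac{\omega_0}{a_0}\,k+\frac{a_0\omega_1-a_1\omega_0}{a_0^2}+O(k^{-1}).
\]
By Definition \ref{def::DF} the constant term is $F(X_0,L_0)$, whereas by Definition \ref{defn::DFext} the $\mathbb{Q}$-line bundle $\lambda_{\CM}(X,L)=(\mu_1^{a_0}\otimes\mu_0^{-a_1})^{1/a_0^2}$ carries weight $(a_0\omega_1-a_1\omega_0)/a_0^2$ on its fiber at $0$. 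The two rational numbers coincide, proving the claim.

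\textbf{Main obstacle.} The only delicate point is ensuring that the Knudsen--Mumford decomposition \eqref{eq::det_exp} can be taken equivariantly, so that each $\mu_i$ inherits a canonical linearization and the identification of weights at $0$ is a genuine polynomial identity in $k$. This equivariance follows from applying the Knudsen--Mumford construction to the $\mathbb{C}^*$-equivariant flat family $(X,L)\to\mathbb{C}$, and is precisely what the sentence preceding the proposition is invoking. Once it is in place, the passage to $\mathbb{Q}$-line bundles is harmless (weights on $\mu^{1/N}$ are defined formally as $\omega/N$) and the remainder is bookkeeping.
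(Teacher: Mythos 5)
Your argument is correct and is essentially the argument the paper itself uses: the paper cites Paul--Tian for this proposition but proves its generalization to big and nef line bundles (Proposition \ref{prop::DF_continuity}) by exactly your route --- an equivariant identification of $\det Rf_*(L^k)|_0$ with the determinant of cohomology of the central fiber, reading the weights $b_j$ off the linearized Knudsen--Mumford expansion, and matching the constant term of $w/\chi$ with the weight on $(\mu_1^{a_0}\otimes\mu_0^{-a_1})^{1/a_0^2}$. Your flagged ``main obstacle'' (equivariance of the Knudsen--Mumford decomposition) is likewise the point the paper addresses by noting that \eqref{eq::det_exp} holds in the sense of linearized $\mathbb{Q}$-line bundles for a test configuration.
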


In order to extend this result to not necessarily relatively ample line bundles we need to use the following theorem essentially due to Knudsen-Mumford \cite{KM76}:

\begin{thm}\label{thm::res_rk_det}
Let $f: X \to Y$ be a projective morphism of schemes, and let $\mathcal F$ be a perfect complex on $X$. We have
$\bigotimes_{j=0}^{\dim X_y} \det H^j(X_y,\mathcal F_y)^{(-1)^j} \simeq \det R f_*(\mathcal F)|_{y}$ functorially for all $y \in Y$.  
\end{thm}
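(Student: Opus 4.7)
The plan is to reduce both sides of the claimed isomorphism to the determinant of a single bounded complex of locally free sheaves, and then use the multiplicative property of $\det$ on exact sequences (equivalently, on quasi-isomorphism classes) to identify the two presentations.

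First I would establish that $Rf_*\mathcal F$ is itself a perfect complex on $Y$. This is one of the main results of \cite{KM76} (and also a special case of the general perfection theorem for derived pushforwards under proper morphisms with perfect input, as in SGA 6); projectivity of $f$ together with perfectness of $\mathcal F$ allows one to produce, locally on $Y$, a finite resolution by $\mathcal O_Y$-flat sheaves with finitely generated cohomology, which can then be replaced by a strictly perfect representative. Concretely, choose an open neighbourhood $U$ of $y$ and a bounded complex $E^\bullet=(E^0\to E^1\to\dots\to E^N)$ of finite-rank locally free $\mathcal O_U$-modules together with a quasi-isomorphism $E^\bullet\simeq Rf_*\mathcal F|_U$. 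By definition of the determinant of a perfect complex one then has
\[
\det Rf_*\mathcal F\big|_U \;=\; \bigotimes_{j=0}^{N}(\det E^j)^{(-1)^j}.
\]

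Next I would perform the fiberwise analysis. Because $E^\bullet$ consists of locally free sheaves, its derived restriction $E^\bullet\otimes^{\mathbf L}_{\mathcal O_Y} k(y)$ coincides with the ordinary restriction $E^\bullet|_y$. On the other hand, base change for perfect complexes (again essentially Knudsen--Mumford, or cohomology-and-base-change applied to a strictly perfect representative) gives a natural quasi-isomorphism
\[
E^\bullet\otimes^{\mathbf L}_{\mathcal O_Y} k(y)\;\simeq\; Li_y^{*}\,Rf_*\mathcal F\;\simeq\; R\Gamma(X_y,\mathcal F_y),
\]
so in particular $H^j(E^\bullet|_y)\cong H^j(X_y,\mathcal F_y)$ for every $j$. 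Now $E^\bullet|_y$ is a bounded complex of finite-dimensional $k(y)$-vector spaces, for which the multiplicativity of determinants along short exact sequences yields the elementary identity
\[
\bigotimes_{j}(\det E^j|_y)^{(-1)^j}\;\simeq\;\bigotimes_{j}\bigl(\det H^j(E^\bullet|_y)\bigr)^{(-1)^j}.
\]
Combining this with the isomorphism of cohomologies and with the formula for $\det Rf_*\mathcal F$ above gives precisely
\[
\det Rf_*\mathcal F\big|_{y}\;\simeq\;\bigotimes_{j=0}^{\dim X_y}\det H^j(X_y,\mathcal F_y)^{(-1)^j}.
\]

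The remaining point is functoriality in $y$ and independence of the local representative $E^\bullet$. Independence is the usual statement that the determinant descends from the category of strictly perfect complexes to the derived category of perfect complexes, which one checks by comparing two representatives through a common third one and using the multiplicativity in distinguished triangles. Naturality in $y$ is automatic because all the constructions employed, namely $\det$, $(-)|_y$, and the base change quasi-isomorphism, are themselves natural. The only genuinely delicate step, and the main obstacle, is establishing the base-change quasi-isomorphism $Li_y^{*}Rf_*\mathcal F\simeq R\Gamma(X_y,\mathcal F_y)$ without flatness assumptions on $f$; this is handled by replacing $\mathcal F$ by a strictly perfect representative on $X$ and $Rf_*\mathcal F$ by $E^\bullet$ simultaneously, so that every pullback in sight becomes underived and the compatibility is reduced to an identity of strict complexes.
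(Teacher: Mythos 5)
Your strategy is the standard Knudsen--Mumford argument, which is all the paper itself offers: the theorem is stated there without proof, with a citation to \cite{KM76}, and your reduction to a strictly perfect local representative $E^\bullet$ of $Rf_*\mathcal F$ followed by multiplicativity of $\det$ on bounded complexes of vector spaces is exactly the argument from that source. However, there is a genuine gap at precisely the step you single out as ``the main obstacle,'' and the fix you propose does not close it. The hypotheses ``$f$ projective and $\mathcal F$ perfect on $X$'' suffice neither for the perfectness of $Rf_*\mathcal F$ nor for the base-change identification $E^\bullet\otimes_{\mathcal O_Y}k(y)\simeq R\Gamma(X_y,\mathcal F_y)$. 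For the first point, take $f$ to be the closed immersion $\Spec k\hookrightarrow \Spec k[\epsilon]/(\epsilon^2)$ and $\mathcal F=\mathcal O_X$: this is projective with $\mathcal F$ perfect, yet $Rf_*\mathcal F=k$ has infinite projective dimension over $k[\epsilon]/(\epsilon^2)$, admits no strictly perfect representative, and so $\det Rf_*\mathcal F$ is not even defined by the recipe you use. For the second point, replacing $\mathcal F$ by a bounded complex of locally free $\mathcal O_X$-modules makes restriction to $X_y$ underived as an operation on $\mathcal O_X$-modules, but it does nothing to control $\operatorname{Tor}^{\mathcal O_Y}_i(\mathcal O_X,k(y))$; the comparison between $E^\bullet\otimes_{\mathcal O_Y}k(y)$ and the cohomology of the honest restriction $\mathcal F|_{X_y}$ genuinely requires $\mathcal F$ (equivalently, in the line-bundle case, $f$) to be flat, or at least of finite Tor-dimension, over $Y$; otherwise the left-hand side computes $R\Gamma(X_y,Li_y^*\mathcal F)$ rather than $R\Gamma(X_y,\mathcal F_y)$, and these can differ.

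The gap is harmless for the purposes of this paper, because the theorem is only ever invoked for a flat family $f:X\to\mathbb C$ with $\mathcal F=L^k$ a line bundle, so that $\mathcal F$ is automatically flat over the base; in that setting $Rf_*\mathcal F$ is perfect, cohomology and base change holds, and your chain of identifications is complete and correct, including the elementary identity $\bigotimes_j(\det E^j|_y)^{(-1)^j}\simeq\bigotimes_j(\det H^j(E^\bullet|_y))^{(-1)^j}$ and the descent of $\det$ to the derived category. To turn your argument into a proof of a true statement you should either add the hypothesis that $\mathcal F$ has finite Tor-dimension over $Y$ (interpreting $\mathcal F_y$ as the derived restriction), or simply assume $f$ flat, which is the only case the paper uses.
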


%
We can then show:
\begin{prop}\label{prop::DF_continuity}
Let $L$ be a relatively big and nef line bundle on $X$. The weight of the $\mathbb C^*$-action induced on the fiber of $\lambda_\CM(X,L)$ over $0\in \mathbb C$ equals the Donaldson-Futaki invariant $F(X_0,L_0)$ (as defined in \ref{def::DF}) of the central fiber.
\end{prop}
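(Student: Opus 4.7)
The plan is to apply Theorem \ref{thm::res_rk_det} to the perfect complex $L^k$ on $X$ in order to compute the fiber of $\det Rf_*(L^k)$ at the fixed point $0 \in \mathbb C$, extract the $\mathbb C^*$-weights term by term, and compare the result with Definition \ref{def::DF}. Since $f$ is equivariant and $L$ comes with a linearization, the Knudsen--Mumford expansion \eqref{eq::det_exp} holds in the category of linearized $\mathbb Q$-line bundles (as already noted at the start of this section). Writing $w_i$ for the weight of the induced $\mathbb C^*$-action on the fiber $\mu_i|_0$, I would first record
\[
\mathrm{weight}\bigl(\det Rf_*(L^k)|_0\bigr) \;=\; w_0 k^{n+1} + w_1 k^n + O(k^{n-1}).
\]

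Next, I would invoke the functoriality asserted in Theorem \ref{thm::res_rk_det} to obtain a $\mathbb C^*$-equivariant isomorphism
\[
\det Rf_*(L^k)|_0 \;\simeq\; \bigotimes_{j=0}^{n}\det H^j(X_0,L_0^k)^{(-1)^j},
\]
whose right-hand side has weight $w(X_0,L_0^k)$ by Definition \ref{def::DF}; hence $w(X_0,L_0^k)$ admits the expansion displayed above. The same theorem combined with flatness of $f$ gives $\rk Rf_*(L^k) = \chi(X_0,L_0^k) = a_0 k^n + a_1 k^{n-1} + O(k^{n-2})$ as in \eqref{eq::rank_exp}, while bigness plus nefness of $L$ ensure $a_0 > 0$. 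Dividing the two expansions yields
\[
\frac{w(X_0,L_0^k)}{\chi(X_0,L_0^k)} \;=\; \frac{w_0}{a_0}\,k + \frac{a_0 w_1 - a_1 w_0}{a_0^2} + O(k^{-1}),
\]
so that, by Definition \ref{def::DF}, $F(X_0,L_0) = (a_0 w_1 - a_1 w_0)/a_0^2$.

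Finally, I would compare this with the weight read directly from Definition \ref{defn::DFext}: since $\lambda_\CM(X,L) = (\mu_1^{a_0}\otimes \mu_0^{-a_1})^{1/a_0^2}$, its fiber at $0$ is acted upon with weight exactly $(a_0 w_1 - a_1 w_0)/a_0^2$, which matches the Donaldson--Futaki invariant computed above and completes the proof. The main technical point that needs justification is that the Knudsen--Mumford expansion \eqref{eq::det_exp} and the isomorphism of Theorem \ref{thm::res_rk_det} are genuinely $\mathbb C^*$-equivariant; this is automatic from the functoriality of derived push-forward and of the determinant construction in the equivariant category, but it is precisely what allows the entire computation to descend from mere isomorphism classes of $\mathbb Q$-line bundles on $\mathbb C$ to actual weights on the fiber at $0$.
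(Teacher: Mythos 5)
Your proposal is correct and follows essentially the same route as the paper: the equivariant Knudsen--Mumford expansion \eqref{eq::det_exp}, the equivariant isomorphism of Theorem \ref{thm::res_rk_det} identifying $\det Rf_*(L^k)|_0$ with $\bigotimes_j \det H^j(X_0,L_0^k)^{(-1)^j}$, the identification of the weight on $\mu_j|_0$ with the coefficient $b_j$ of $w(X_0,L_0^k)$, and the conclusion via Definition \ref{defn::DFext}. You merely make explicit the final division of the two expansions, which the paper leaves implicit in ``the thesis follows.''
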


\begin{proof}
Since $L$ is $\mathbb C^*$-linearized, the determinant $\det R f_* (L^k)$ inherits a $\mathbb C^*$-linearization. Regarding $L$ as a perfect complex (supported on degree zero) on $X$, by Theorem \ref{thm::res_rk_det} we obtain an equivariant isomorphism 
$$ \bigotimes_{j=0}^{\dim X_0} \det H^j(X_0,L_0^k)^{(-1)^j} \simeq \det Rf_* (L^k)|_0 $$ for each $k>0$.

By \eqref{eq::det_exp} we have an equivariant expansion 
$$  \bigotimes_{j=0}^{\dim X_0} \det H^j(X_0,L_0^k)^{(-1)^j} \simeq \mu_0|_0^{k^{n+1}} \otimes \mu_1|_0^{k^n} \otimes \dots \otimes \mu_{n+1}|_0,$$ whose weight must coincide for every $k$ with $$ w(X_0,L_0^k) = b_0 k^{n+1} + b_1 k^n + \dots + b_{n+1}.$$ 
Hence the weight on the $\mathbb Q$-line $\mu_j|_0$ is $b_j$ and the thesis follows by definition \ref{defn::DFext}. 
\end{proof}

\begin{cor}
\label{continue}
Let $L,A$ be linearized line bunldes on a scheme $V$ acted on by $\mathbb C^*$. Suppose that $L$ is big and nef and $A$ ample. We have $$ F(V,L^r \otimes A) = F(V,L) + O\left( \frac {1}{r} \right), \qquad \mbox{as }r \to \infty.$$ 
\end{cor}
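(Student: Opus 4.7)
The plan is to deduce this continuity result for the Futaki invariant directly from the continuity property of the CM-line bundle (Proposition \ref{prop::CM_continuity}) by interpreting $(V,L,A)$ as a product test configuration and then taking $\mathbb{C}^*$-weights over the origin via Proposition \ref{prop::DF_continuity}.

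First, I would realize the data as a test configuration in the sense of the previous section. Consider the trivial family $f \colon X = V \times \mathbb{C} \to \mathbb{C}$ given by the second projection, endowed with the diagonal $\mathbb{C}^*$-action (the given one on $V$ and the standard weight-one action on $\mathbb{C}$). Pulling back $L$ and $A$ along the first projection $\pi \colon X \to V$ produces $\mathbb{C}^*$-linearized line bundles on $X$, which I will continue to denote by $L$ and $A$. Since $L_{|V}$ is big and nef and $A_{|V}$ is ample, the pulled-back $L$ is relatively big and nef and $A$ is relatively ample with respect to $f$. We are then in the hypotheses of Proposition \ref{prop::CM_continuity}.

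Second, I would apply Proposition \ref{prop::CM_continuity} with $N = A$ to obtain the asymptotic identity
$$ \lambda_\CM(X, L^r \otimes A) = \lambda_\CM(X,L) \otimes O\!\left(\tfrac{1}{r}\right), \qquad r \to \infty. $$
The key point here is that this identity lifts to the $\mathbb{C}^*$-equivariant category: the Knudsen--Mumford decomposition and all the tensor/determinant/rank manipulations used in the proof of Proposition \ref{prop::CM_continuity} are functorial, so the $\mathbb{Q}$-line bundles $\mu_0, \mu_1, \rho_0, \dots$ appearing in that proof inherit $\mathbb{C}^*$-linearizations, and the error term is itself an explicit tensor product of such equivariant $\mathbb{Q}$-line bundles with exponents that are $O(1/r)$.

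Finally, I would take the $\mathbb{C}^*$-weight over $0 \in \mathbb{C}$ of both sides. By Proposition \ref{prop::DF_continuity}, applied once to the central fiber $(V, L^r \otimes A)$ and once to $(V,L)$, these weights are exactly $F(V, L^r \otimes A)$ and $F(V,L)$, respectively. Since taking weights is additive on tensor products of equivariant $\mathbb{Q}$-line bundles, the $O(1/r)$ correction in the CM-line identity translates into a numerical $O(1/r)$ correction in the Futaki invariants, yielding the desired expansion.

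The only subtlety I expect to address is confirming that the symbolic $O(1/r)$ appearing at the level of $\mathbb{Q}$-line bundles truly descends to a numerical $O(1/r)$ after extracting weights; however, this is a bookkeeping check on the explicit expansion in the proof of Proposition \ref{prop::CM_continuity}, where the error is a product of fixed (equivariant) $\mathbb{Q}$-line bundles raised to exponents of order $1/r$, so passing to weights gives a genuine rational $O(1/r)$.
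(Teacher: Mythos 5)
Your proposal is correct and follows exactly the route the paper intends: the corollary is stated without proof precisely because it is the combination of Proposition \ref{prop::CM_continuity} (applied to the product family $V\times\mathbb{C}$ with the diagonal action) and Proposition \ref{prop::DF_continuity} (to convert the CM-line identity into an identity of weights over $0$). Your flagged subtlety about the equivariance of the $O(1/r)$ term is the right thing to worry about; it is settled by noting that the difference $\lambda_\CM(X,L^r\otimes A)\otimes\lambda_\CM(X,L)^{-1}$ is itself linearized and that its weight is computed from the coefficients $a_i(r)$, $b_i(r)$, which depend polynomially on $r$ with the degrees recorded in the proof of Proposition \ref{prop::CM_continuity}.
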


We now need to recall the following (cf. \cite{ko-mo} Def. 3.33 page 99 and Def. 6.10 page 191):

\begin{defn}
Let $f:X\to Y$ be a proper small birational morphism, and assume that $D$ is a divisor such that $K_X+D$ is $\QQ$-Cartier and $-K_X$ is $f$-ample (resp. numerically $f$-trivial). A variety $X^+$ along with a {\it small} proper morphism $f^+:X^+\to Y$ (which then induces a birational map $\phi: X\to X^+$) is called a $K_X+D$-{\it flip} (resp. $D$-{\it flop}) -or simply flip, when $D=\emptyset $- if:
\begin{enumerate}
\item $K_{X^+}+D^+$ is  $\QQ$-Cartier, if $D^+$ denotes the closure of $\phi^{-1}(D)$
\item $K_{X^+}+D^+$ is $f^+$-ample.
\end{enumerate} 
\end{defn}

Combining Propositions \ref{prop::DF_continuity} and \ref{prop::CM_properties} yields: 

\begin{prop}\label{prop::Futakiflips}
Given two test configurations $(X,L)$ and $(X',L')$ and $\xi: X\to X'$ a $\CC^*$-equivariant small birational morphism such that $\xi ^* (L')=L$, then $F(X_0, L_0)= F(X_0', L_0')$. In particular, the Futaki invariant is preserved under  $K_X+D$-{\it flips} and  $D$-{\it flops} of the family which preserve the generic fibre.
\end{prop}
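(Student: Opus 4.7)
The plan is to reduce the first statement to a direct application of two tools already established in Section 2: Proposition \ref{prop::CM_properties}(\ref{item::sbir}), which asserts invariance of the CM-line bundle under small birational morphisms compatible with the family structure and with the polarization, together with Proposition \ref{prop::DF_continuity}, which identifies the Donaldson-Futaki invariant of the central fibre with the $\CC^*$-weight on the fibre of $\lambda_\CM$ over $0\in\CC$. Before invoking the latter for $(X,L)$, I would first verify that $L=\xi^*L'$ is relatively big and nef: nefness is preserved under pullback, and bigness of $L$ on fibres follows from smallness of $\xi$, which ensures that $L$ and $L'$ have the same top self-intersection on corresponding fibres. In particular the hypotheses needed to define $\lambda_\CM(X,L)$ and to apply Proposition \ref{prop::DF_continuity} are in place.

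The heart of the proof then consists in applying Proposition \ref{prop::CM_properties}(\ref{item::sbir}) to the commutative triangle formed by $\xi$ and the two family maps to $\CC$. The argument given there is built from the projection formula and the exact sequence $0\to\mathcal{O}_{X'}\to R\xi_*\mathcal{O}_X\to Q\to 0$ with $\codim\supp Q\geq 2$; since all these sheaves and maps carry natural $\CC^*$-linearizations inherited from the equivariance of $\xi$, the same argument goes through equivariantly and yields an equivariant isomorphism $\lambda_\CM(X,L)\simeq\lambda_\CM(X',L')$ of $\QQ$-line bundles on $\CC$. Restricting to the fixed point $0\in\CC$ produces two one-dimensional $\CC^*$-representations of equal weight, and Proposition \ref{prop::DF_continuity} identifies those weights with $F(X_0,L_0)$ and $F(X'_0,L'_0)$ respectively, which is what we want.

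For the statement on flips and flops, I would factor the birational transformation $\phi\colon X\dashrightarrow X^+$ of the total spaces through the common small contractions $\xi\colon X\to Y$ and $\xi^+\colon X^+\to Y$, which exist by definition of flip/flop and which, being canonically constructed, are $\CC^*$-equivariant when performed on the family. The assumption that the generic fibre is preserved gives a line bundle $L_Y$ on $Y$ whose pullbacks along $\xi$ and $\xi^+$ recover the polarizations $L$ and $L^+$; two applications of the first assertion then yield $F(X_0,L_0)=F(Y_0,L_{Y,0})=F(X^+_0,L^+_0)$. The main obstacle I expect is purely bookkeeping, namely checking that each isomorphism constructed in Proposition \ref{prop::CM_properties}(\ref{item::sbir}) genuinely respects the $\CC^*$-linearizations, rather than anything substantive.
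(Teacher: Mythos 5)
Your treatment of the first assertion is essentially the paper's own proof: combine Proposition \ref{prop::CM_properties}(\ref{item::sbir}) with Proposition \ref{prop::DF_continuity}, after observing that $\xi^*L'$ is relatively big and nef and that the isomorphism of CM-lines is equivariant (the paper records at the start of Section \ref{sec::applications} that Proposition \ref{prop::CM_properties} holds in the linearized sense). Your explicit check that $L=\xi^*L'$ is big and nef on fibres is a worthwhile addition that the paper leaves implicit.

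For the ``in particular'' clause the two arguments diverge, and yours has a gap where the paper concentrates its effort. The paper's entire proof of that clause is devoted to showing that the flip/flop $X^+$ is again a test configuration, i.e.\ that it inherits a $\CC^*$-action: this is done by passing to a $\CC^*$-equivariant resolution $W\to X^+$ and realizing $X^+$ as $\proj_Y\bigl(\bigoplus_k H^0(X^+,(K_{X^+}+D^+)^k)\bigr)$ with the action induced from $W$. You dismiss this as automatic because the contractions are ``canonically constructed''; that is the right intuition (uniqueness of the flip does force equivariance once the action descends to $Y$), but it is precisely the point the paper felt obliged to argue, so it should not be waved away. Conversely, the line-bundle bookkeeping you emphasize contains an incorrect claim: preservation of the generic fibre does \emph{not} produce $L_Y$ on $Y$ with $\xi^*L_Y=L$. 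If $L$ is relatively ample for $X\to\CC$ and $\xi$ contracts a curve $C$ in the central fibre, then $L\cdot C>0$ while any pullback from $Y$ has degree zero on $C$, so $L$ cannot descend. The correct reading, consistent with the first assertion (where $L=\xi^*(L')$ is likewise only big and nef), is that one \emph{starts} from a line bundle on $Y$ and pulls back to both $X$ and $X^+$, obtaining big and nef polarizations on each side; it is exactly for this reason that the extension of $F$ to big and nef bundles in Definition \ref{def::DF} is needed. With that correction your chain $F(X_0,L_0)=F(Y_0,L_{Y,0})=F(X^+_0,L^+_0)$ via two applications of the first assertion is the intended argument.
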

\begin{proof}
The first part of the proposition follows directly from Propositions \ref{prop::DF_continuity} and \ref{prop::CM_properties}. 
Therefore, all one needs to show is that a flip or flop of a test configuration stays such, and hence simply that a flip or flop $X^+$ of $X$ is still endowed with a $\CC^*$-action. This is easy to show, as in fact (after an argument involving a $\CC^*$-equivariant Hironaka) one can reduce oneself to considering a projective scheme $W$ endowed with a $\CC^*$-action and a regular birational morphism $\phi^+:W\to X^+$. 
It is now easy to show, given any $f^+$-ample line bundle $A$ on $X^+$, that the $\CC^*$-action on $W$ induces an action on $H^0(X^+, A^k)$ for any integer $k$ which coincides with the natural action on $H^0(W,(\phi^+)^*A ^k)$. Hence, taking $A= K_{X^+} +D^+$, we find there is a $\CC^*$-action on $X^+= \proj _Y\left( \bigoplus_k H^0(X^+, A^k)\right)$ which coincides with the action on $W$ on the Zariski open sets on which $f^+$ is an isomorphism.
\end{proof}
Before stating our main result we need to recall two important results. The first one is essentially due to Mumford \cite{KKMS73}

\begin{thm}[Equivariant semi-stable reduction]\label{thm::ESSR}
Let $f:X \to \mathbb C$ be a $\mathbb C^*$-equivariant family of projective schemes with smooth general fiber. Then there exist an integer $d>0$ and a projective equivariant morphism $\beta$ as follows 
$$
\xymatrix{ X' \ar@{.>}[ddr]_{f'} \ar[dr]^{\beta} & & \\
& X \times_{\pi_d} \mathbb C \ar[d] \ar[r] & X \ar[d]^f \\
& \mathbb C \ar[r]^{\pi_d} & \mathbb C }
$$
where $\pi_d(z) = z^d$, such that 
\begin{itemize}
\item $\beta$ is the blow-up of an invariant ideal sheaf supported over $0 \in \mathbb C$, 
\item the square is equivariant if we compose the given $\mathbb C^*$-action on $f: X \to \mathbb C$ with the $d$-fold covering $t \mapsto t^d$ on $\mathbb C^*$. 
\item $X'$ is smooth and the central fibre $f'^{-1}(0)$ is a reduced with non-singular components crossing normally.  
\end{itemize}
\end{thm}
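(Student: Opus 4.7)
The plan is to reduce the statement to the classical (non-equivariant) semi-stable reduction theorem of Kempf--Knudsen--Mumford--Saint-Donat and then upgrade each step so that the $\mathbb C^*$-action is preserved throughout. First I would apply the classical KKMS theorem to the underlying family $f: X \to \mathbb C$, temporarily forgetting the group action. This yields an integer $d > 0$, a base change $\pi_d : \mathbb{C} \to \mathbb{C}$, $z \mapsto z^d$, and a projective birational modification $\beta_{0}: X'_{0} \to X \times_{\pi_d} \mathbb C$ such that $X'_{0}$ is smooth and the central fibre is a reduced divisor with smooth components meeting transversally.

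Next I would observe that the base change step is already equivariant for free: the map $\pi_d$ becomes $\mathbb C^*$-equivariant precisely when we precompose the standard action on the source $\mathbb C^*$ by the $d$-fold cover $t \mapsto t^d$. With this choice the fiber product $X \times_{\pi_d} \mathbb C$ acquires a canonical $\mathbb C^*$-action, and the right-hand square in the diagram becomes equivariant.

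The serious point is to arrange the modification $\beta$ to be equivariant. Here I would exploit the fact that any projective birational morphism to $X \times_{\pi_d} \mathbb C$ which is an isomorphism over $\mathbb C^*\subset \mathbb C$ can be written as the blow-up of an ideal sheaf $\mathcal I$ with support over $0 \in \mathbb C$. Since blowing up an invariant ideal is intrinsically equivariant, it suffices to produce an invariant $\mathcal I$ whose blow-up has the desired properties. This is where one invokes equivariant resolution of singularities / equivariant toroidal modification: by functorial equivariant resolution (of, e.g., Villamayor or Bierstone--Milman type), applied successively to normalize, to desingularize, and to arrange the simple normal crossing condition on the central fibre, all the centers encountered are canonical in $X \times_{\pi_d} \mathbb C$ and hence automatically $\mathbb C^*$-invariant. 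Composing these equivariant blow-ups produces the required invariant $\mathcal I$.

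The main obstacle is this last equivariance upgrade: one must check that each of the steps in the KKMS argument (toroidal resolution of the reduced total space, normalization, blow-ups of smooth centers contained in the central fibre) can be performed with $\mathbb C^*$-invariant centers. This is handled either by appealing to the equivariant version of Hironaka's theorem or by the original toroidal methods in KKMS, which produce canonical (hence equivariant) modifications; in either case the $\mathbb C^*$-structure propagates through the construction without further choice.
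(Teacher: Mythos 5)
Your argument is correct and follows essentially the same route as the paper: apply the non-equivariant Kempf--Knudsen--Mumford--Saint-Donat semi-stable reduction first, note that the base change $\pi_d$ becomes equivariant once the $\mathbb C^*$-action is reparametrized by $t\mapsto t^d$, and then obtain equivariance of the modification $\beta$ from the equivariant (functorial) resolution theorem. Your additional remarks --- that a projective birational morphism which is an isomorphism away from the central fibre is the blow-up of an ideal sheaf supported over $0$, and that canonicity of the centers forces $\mathbb C^*$-invariance --- only make explicit what the paper leaves implicit.
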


\begin{proof}
For the time being, let us neglect the $\mathbb C^*$-action. Applying Mumford's semi-stable reduction theorem \cite{KKMS73}, we get a smooth curve $C'$ with a marked point $0'$, a finite morphism $\pi: C' \to \mathbb C$ such that $\pi^{-1}(0) = \{0'\}$, and a projective morphism $\beta$ as follows     
$$
\xymatrix{ X' \ar@{.>}[ddr]_{f'} \ar[dr]^{\beta} \\
& X \times_{\pi} \mathbb C \ar[d] \ar[r] & X \ar[d]^f \\
& C' \ar[r]^{\pi} & \mathbb C }
$$
such that $\beta$ is an isomorphism over $C'\setminus \{0'\}$, $X'$ is smooth and the fiber $f'^{-1} (0') $ is reduced with non-singular components crossing normally. 

Now we show that everything can be supposed $\mathbb C^*$-equivariant. First of all, restricting $\pi$ to a local chart $\mathbb C$ centered at  $0'\in C'$, we may suppose without loss of generality that $\pi = \pi_d$ for some integer $d>0$. Then if we compose the given action on $X$ and $\mathbb C$ with the $d$-th covering $t \mapsto t^d$ of $\mathbb C^*$, we obtain a new action on $f:X \to \mathbb C$ inducing an action on the fiber product $X \times_{\pi_d}\mathbb C$ that makes the projections over $X$ and $\mathbb C$ equivariant. Finally, since the existence of $\beta$ is a consequence of  Hironaka's resolution theorem, we can suppose $X'$ acted on by $\mathbb C^*$ and $\beta$ equivariant thanks to the equivariant resolution theorem (cf. \cite{ko}, 4.1 pg.4).       
\end{proof}

The second result we need is the following proposition which has been proved in  \cite[Proposition 5.1]{rt} by Ross-Thomas with the classical definition of Futaki invariant.

\begin{prop}\label{prop::dominate_TC}
Given a test configuration $f:(X,L) \to \mathbb C$ as above, let $f':(X',L')\to \mathbb C$ be another flat equivariant family with $X'$ normal and let $\beta: (X',L') \to (X,L)$ be a $\mathbb C^*$-equivariant birational map such that $f'=f \circ \beta$ and $L'=\beta^*L$. Then we have $$ F(X'_0,L'_0) \geq F(X_0,L_0), $$ with strict inequality if and only if the support of $\beta_*(\mathcal O_{X'})/\mathcal O_X$ has codimension one.  
\end{prop}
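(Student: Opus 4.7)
The strategy is to compare the two CM-lines $\lambda_\CM(X,L)$ and $\lambda_\CM(X',L')$ equivariantly, so that via Proposition \ref{prop::DF_continuity} -- which identifies each Futaki invariant with the $\mathbb C^*$-weight at $0$ of the corresponding CM-line -- the inequality can be read off as a weight computation. Since $L' = \beta^*L$ and $f' = f \circ \beta$, the projection formula yields $Rf'_*L'^k \simeq Rf_*(L^k \otimes^L R\beta_*\mathcal O_{X'})$. The distinguished triangle
$$\mathcal O_X \to R\beta_*\mathcal O_{X'} \to C \to [1],$$
with $\mathcal H^0(C) = Q := \beta_*\mathcal O_{X'}/\mathcal O_X$ and $\mathcal H^i(C) = R^i\beta_*\mathcal O_{X'}$ for $i \geq 1$, after tensoring with $L^k$ and pushing forward by $f$, produces an equivariant distinguished triangle
$$Rf_*L^k \to Rf'_*L'^k \to Rf_*(L^k \otimes^L C) \to [1].$$
Since $\det$ and $\rk$ are additive on such triangles, $\lambda_\CM(X',L')$ differs from $\lambda_\CM(X,L)$ by a correction governed by the Knudsen--Mumford asymptotics of $\det Rf_*(L^k \otimes^L C)$ and $\rk Rf_*(L^k \otimes^L C)$, and the weight of this correction at $0$ equals $F(X'_0,L'_0) - F(X_0,L_0)$.

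Next, one locates $\supp C$. Since $\beta$ is an isomorphism over $\mathbb C^*$ (the general fibers of $f$ and $f'$ are both the smooth polarized manifold), every $\mathcal H^i(C)$ is supported on $X_0$. Factoring $\beta$ as $\nu \circ \beta_1$, where $\nu: X^\nu \to X$ is the finite normalization and $\beta_1 : X' \to X^\nu$ is a proper birational morphism onto a normal variety, one concludes that $R^i\beta_*\mathcal O_{X'} = R^i\beta_{1*}\mathcal O_{X'}$ is supported in codimension at least $2$ for every $i \geq 1$ (a classical fact for proper birational morphisms to normal targets). Hence the codimension-one contribution to $C$ can only come from $Q$. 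If $\supp Q$ has codimension at least $2$, then $C$ has its whole support in codimension at least $2$, the correction factor is $O(k^{n-2})$ in rank with trivial leading Knudsen--Mumford terms -- precisely the mechanism underlying Proposition \ref{prop::CM_properties}(\ref{item::sbir}) -- and the CM-lines coincide equivariantly, yielding $F(X'_0,L'_0) = F(X_0,L_0)$.

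The remaining case is when the codimension-one part of $\supp Q$ is a non-zero effective Weil divisor $D = \sum m_i D_i$ whose components $D_i$ lie among the irreducible components of $X_0$, with $Q$ generically a free $\mathcal O_{D_i}$-module of rank $r_i \geq 1$ along each $D_i$. Using the big-and-nef restrictions $L|_{D_i}$, the correction factor can be computed asymptotically via Definition \ref{defn::DFext}, giving a weight at $0$ which is a linear combination (weighted by the $m_i r_i$) of ``Futaki-type'' contributions associated to the pairs $(D_i, L|_{D_i})$. The heart of the proof -- and the main obstacle -- is to verify that this combination is strictly positive. The key input is that each $D_i$ is $\mathbb C^*$-invariant and sits set-theoretically in the zero-fiber of the equivariant map $f: X \to \mathbb C$, whose base action has weight one; hence the $\mathbb C^*$-weights controlling the correction are all strictly positive. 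This is the same mechanism as in Ross--Thomas's \cite[Prop.~5.1]{rt}, and its adaptation to the big-and-nef setting is made possible precisely by the continuity property established in Proposition \ref{prop::CM_continuity}. Combining the three cases yields the stated inequality with the characterization of equality.
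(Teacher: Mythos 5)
Your proposal is correct and follows essentially the same route as the paper's proof: both pass through the projection formula to write $Rf'_*(L'^k)$ as $Rf_*(R\beta_*\mathcal O_{X'}\otimes L^k)$, both dispose of the higher direct images $R^i\beta_*\mathcal O_{X'}$ ($i\geq 1$) by factoring $\beta$ through the normalization of $X$ and observing that their support has codimension at least two, and both reduce the sign of the remaining $k^n$-order correction coming from $\beta_*\mathcal O_{X'}/\mathcal O_X$ to the mechanism of Ross--Thomas \cite[Proposition 5.1]{rt}. The only cosmetic difference is that you organize the comparison via a distinguished triangle and the CM-line, while the paper uses the Knudsen--Mumford determinant decomposition and the Stein factorization of $\beta$ to land literally in the Ross--Thomas situation.
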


\begin{proof}
Since our definition of Futaki invariant involves higher cohomology, the statement is not a priori the same as the one by Ross-Thomas (loc. cit.). On the other hand we prove the statement reducing to the situation considered by Ross-Thomas.   
For each $m \in \mathbb Z$ using the projection formula we have:
\begin{eqnarray*} Rf'_*((L')^m) &=& R(f_*\circ \beta_*) (\beta^*L^m) = Rf_* \circ R \beta_* (\beta^* L^m) \\ &=& Rf_*(R\beta_*(\mathcal O_{X'}) \otimes L^m) ,\end{eqnarray*}

Now, by \cite[Proposition 8]{KM76} $$ \det Rf'_*((L')^m) = 
\bigotimes_{h,k} \det (R^hf_*(H^k(R\beta_* (\mathcal O_{X'}) \otimes L^m)))^{(-1)^{h+k}}. $$ 
Since  $L$ is relatively ample, we can consider $L^m$ as a complex supported in degree $0$, so we have:
$$H^k(R\beta_* (\mathcal O_{X'}) \otimes L^m)= R^k\beta_* (\mathcal O_{X'}) \otimes L^m$$
and therefore using the relative ampleness of $L$ again (through Serre's criterion of ampleness):
$$ R^hf_*(H^k(R\beta_* (\mathcal O_{X'}) \otimes L^m))=  R^hf_*(R^k\beta_* (\mathcal O_{X'}) \otimes L^m)=0$$
if $h>0$, when $m\gg1$.
hence:
 $$ \det Rf'_*((L')^m) = \bigotimes _k \det f_*(R^k\beta_*(\mathcal O_{X'})\otimes L^m)^{(-1)^{k}}.$$
We now claim that since $X'$ is normal, the term $ \det f_*(R^k\beta_*(\mathcal O_{X'})\otimes L^m)$ is $ O(m^{n-1})$ where $n+1$ is the dimension of $X'$ and $k>0$. Indeed, we can apply  \cite[Theorem 4]{KM76}, to $Y=\CC$,  $P=\mathbb P (E)$ with $E=\CC \times  H^0( X, L^m)^*$ (recall $X\to \CC$ is relatively imbedded in $\PP ^{N_m}=\mathbb P H^0( X, L^m)^*$ via $L^m$) and with $\mathcal F = R^k\beta_*(\mathcal O_{X'}) \otimes \mathcal O _P$ (hence  $\mathcal F(m)=R^k\beta_*(\mathcal O_{X'}) \otimes \mathcal O _P(m) \simeq (R^k\beta_*(\mathcal O_{X'})\otimes L^m)\otimes \mathcal O _P$).
Now note that $\mathcal F(m)$ restricts to $R^k\beta_*(\mathcal O_{X'})\otimes L^m$ on $X$ and that $\det$ commutes with base change.

One has that $ \det  f_*(R^k\beta_*(\mathcal O_{X'})\otimes L^m)$ grows like $m^{r+1}$, where $r+1$ is the dimension of the scheme-theoretic intersection $ \supp \left( R^k\beta_*(\mathcal O_{X'})\right) \cap f^{-1}(0)$. Indeed in the present situation, from Definition of property $Q_{(r)}$ in \cite[pg. 50]{KM76}, the number $r$ is defined by $$r = \min\left\{ s>0 \,|\,\dim \supp \left( R^k\beta_*(\mathcal O_{X'})\right)_{y}\leq s+{\rm depth}(y) \mbox{ for all } y \in \CC\right\},$$ where $ \supp \left( R^k\beta_*(\mathcal O_{X'})\right)_y= \supp R^k\beta_*(\mathcal O_{X'}) \times _{\CC} {\rm Spec}(\CC(y))$ is the scheme-theoretic intersection $ \supp \left( R^k\beta_*(\mathcal O_{X'})\right) \cap f^{-1}(y)$ for every point (geometric or generic) $y\in \CC$. All geometric points of $\CC$ have depth 1 and the null ideal of $\CC$, which is the only generic point, has depth 0. On the other hand $R^k\beta_*(\mathcal O_{X'})$ is supported over $0 \in \CC$ thus $\dim \supp \left( R^k\beta_*(\mathcal O_{X'})\right)_y$ can be positive only over the geometric point $y=0$, thus we get $r+1=\dim \supp \left( R^k\beta_*(\mathcal O_{X'})\right)_0$ and therefore the claim reduces to showing that the codimension of  $\supp \left(R^k\beta_*(\mathcal O_{X'})\right)_0$ in $f^{-1}(0)$ is at least $1$, or that the codimension of  $\supp \left(R^k\beta_*(\mathcal O_{X'})\right)$ in $X$ is at least $2$.

We argue this as follows. Note that since $X'$ is normal we can factor $\beta$ through the normalization $\nu :W\to X$. So we have a diagram:
$$X'   \stackrel{\beta '}{\to}W \stackrel{\nu}{\to} X$$ with $\beta = \nu \circ \beta'$. Since $\nu$ is finite (and so has no higher cohomology) $R\beta_* (\mathcal O_{X'}) = R\nu_* \circ R\beta_* ' (\mathcal O_{X'}) = \nu_* \circ R\beta'_*\left( \mathcal O_{X'}\right)$, thus the support of $R^k\beta_*(\mathcal O_{X'})$ is contained in  the image via $\nu$ of the support of $R^k\beta_* '(\mathcal O_{X'})$, which in turn is contained it $\sing(W)$ hence it has codimension at least $2$ since $W$ is normal. Indeed if $\delta:X''\to X'$ is a desingularization and $\beta''= \beta' \circ \delta $, then we have $R \beta'_* (\mathcal O_{X'}) = R \beta'_*(\delta_* (\mathcal O_{X''}))$, thus $\supp R \beta'_* (\mathcal O_{X'}) \subset \supp R \beta''_*(\mathcal O_{X''}) \subset \sing(W)$, being $\beta''$ a desingularization of $W$. Therefore the leading and the following term of $ \det Rf'_*((L')^m)$ are concentrated in $ \det f_*(\beta_*(\mathcal O_{X'})\otimes L^m)=\det f_* (p_* (\mathcal O _Z) \otimes L^m)$ (in this equality we just use the equivariant isomorphism $\mathcal O_Z \simeq q_*(\mathcal O_{X'})$ coming from the Stein factorization $ X' \stackrel{q}{\to} Z \stackrel{p}{\to} X$ of $\beta$) and we are in the situation considered by Ross-Thomas. In particular the weight 
of the $\mathbb C^*$-action on 
$$ \left( \det Rf'_*((L')^m) \otimes \det f_*(L^m)^{-1} \right)|_0$$ is equal to 
$a\,m^n + O(m^{n-1})$ with $a>0$ when $\supp (\beta_*(\mathcal O_{X'}) / \mathcal O_X)$ has dimension $n$ and $a=0$ otherwise.
\end{proof}

In particular, we can control the behavior of the Futaki invariant under some important class of birational morphisms. 

\begin{cor}\label{cor::control_Futaki}
In the situation of Proposition \ref{prop::dominate_TC} we have 
\begin{enumerate}
\item If $f:X' \to X$ is the blow-up of $X$ along an invariant subscheme of codimension at least two set-theoretically supported over the central fiber $X_0$, we have $F(X_0',L_0') = F(X_0,L_0)$.
\item If $X_{\rm non-normal}$ (i.e. the set of non-normal points of $X$) has codimension at least two, then $F(X_0',L_0') = F(X_0,L_0)$.
\item If $X_{\rm non-normal}$ has codimension one, then $F(X_0',L_0') > F(X_0,L_0)$.     
\end{enumerate}
\end{cor}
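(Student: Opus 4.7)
The plan is straightforward: in each of the three cases, I would compute the codimension of $\supp(\beta_*\mathcal O_{X'}/\mathcal O_X)$ and then invoke the dichotomy already established in Proposition \ref{prop::dominate_TC}. Equality versus strict inequality in the conclusion of that proposition corresponds exactly to the support having codimension at least two versus codimension one, so no further analytic input is required here and the whole corollary is a bookkeeping exercise built on top of the proposition.

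For part (1), let $Z \subset X_0$ denote the invariant center of codimension at least two. Since a blow-up is an isomorphism away from its center, the natural inclusion $\mathcal O_X \hookrightarrow \beta_*\mathcal O_{X'}$ is an isomorphism on $X \setminus Z$, so $\supp(\beta_*\mathcal O_{X'}/\mathcal O_X) \subseteq Z$ has codimension at least two in $X$. Proposition \ref{prop::dominate_TC} then yields $F(X'_0,L'_0) = F(X_0,L_0)$. If the blow-up $X'$ happens not to be normal, I would first replace it by its normalization and invoke item (2) to see that this substitution does not affect the Futaki invariant, using the fact that the non-normal locus of $X'$ is contained in $\sing(X')$ and therefore has codimension at least two.

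For parts (2) and (3), I would apply Proposition \ref{prop::dominate_TC} with $X' = \tilde X$ the normalization of $X$ (endowed with the lifted $\mathbb C^*$-action coming from the universal property of normalization applied to the action map) and $L' = \nu^*L$, where $\nu : \tilde X \to X$ is the normalization morphism. By the very definition of the non-normal locus, the cokernel $\nu_*\mathcal O_{\tilde X}/\mathcal O_X$ is supported exactly on $X_{\mathrm{non-normal}}$. Thus in case (2) its support has codimension at least two and Proposition \ref{prop::dominate_TC} gives equality $F(X'_0,L'_0) = F(X_0,L_0)$, while in case (3) its support has codimension one and the same proposition produces the strict inequality $F(X'_0,L'_0) > F(X_0,L_0)$.

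The main technicality I expect is verifying that in each case the domain $X'$ genuinely qualifies as a flat equivariant family over $\mathbb C$ satisfying all hypotheses of Proposition \ref{prop::dominate_TC}. Flatness over $\mathbb C$ is automatic once the morphism is dominant with pure-dimensional fibers, and this property is preserved by both blow-ups along an invariant subscheme supported in $X_0$ and by normalization; equivariance of each lift follows from the universal properties already invoked. None of these checks appears to present a genuine obstacle, and the substantive content of the corollary is really the codimension count combined with the Ross--Thomas style strict inequality packaged inside Proposition \ref{prop::dominate_TC}.
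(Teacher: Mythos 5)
Your treatment of part (1) and your codimension count for the normalization are in the spirit of the paper, but there is a genuine gap in parts (2) and (3): the corollary is asserted ``in the situation of Proposition \ref{prop::dominate_TC}'', i.e.\ for an \emph{arbitrary} normal $X'$ with a $\mathbb C^*$-equivariant birational $\beta:(X',L')\to (X,L)$, whereas you only prove it for the specific choice $X'=\tilde X$, the normalization of $X$. This weaker statement does not suffice for the way the corollary is used later (in Theorems \ref{genmain} and \ref{thm::zerofut} it is applied to the semistable reduction $X'$, which is smooth but is certainly not the normalization of $X$). The missing step, which is the actual content of the paper's proof, is to reduce the general case to yours: since $X'$ is normal, $\beta$ factors through the equivariant normalization as $\beta=\nu\circ\beta'$ with $\nu:W\to X$ and $\beta':X'\to W$ proper birational between normal varieties, so Zariski's main theorem gives $\beta'_*(\mathcal O_{X'})\simeq\mathcal O_W$ and hence $\beta_*(\mathcal O_{X'})\simeq\nu_*(\mathcal O_W)$. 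Only after this identification is $\supp\bigl(\beta_*(\mathcal O_{X'})/\mathcal O_X\bigr)$ equal to $X_{\rm non-normal}$ independently of $X'$, and the dichotomy of Proposition \ref{prop::dominate_TC} applies as you intend. Without it, nothing rules out that for some other normal $X'$ the cokernel $\beta_*(\mathcal O_{X'})/\mathcal O_X$ acquires codimension-one support even when $X_{\rm non-normal}$ has codimension two, or vice versa.

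A secondary flaw: in part (1) you handle a possibly non-normal blow-up by normalizing and asserting that the non-normal locus of $X'$ lies in $\sing(X')$ ``and therefore has codimension at least two''. The inclusion in the singular locus is true, but the codimension conclusion is a non sequitur: an arbitrary variety can be singular, and non-normal, along a divisor (it is precisely normal varieties whose singular locus has codimension at least two). If you want to treat a non-normal blow-up you must argue directly that the cokernel of $\mathcal O_X\to\beta_*(\mathcal O_{X'})$ is still supported over the center (which is what the paper uses), not route through item (2) with an unjustified codimension claim.
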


\begin{proof}
The first assertion follows easily after noting that $\beta_*(\mathcal O_{X'}) / \mathcal O_X$ can be non-zero only over the center of the blow-up, that has at least codimension two.

To prove assertion two and three observe that thanks to the normality of $X'$, $\beta$ factorizes $\beta = \nu \circ \beta' $ through the (equivariant) normalization $\nu: W \to X$. By Zariski's main theorem $\beta'_* (\mathcal O_{X'}) \simeq \mathcal O_W$, thus $ F(X'_0,L'_0) = F(W_0,\nu^*L|_0)$ and $\beta_*( \mathcal O_{X'} ) \simeq \nu_* (\mathcal O_W)$, whence the thesis follow.   
\end{proof}

We can now prove the main Theorems:
\begin{proof}[Proof of Theorem \ref{genmain}]
Consider the CM-line bundle $\lambda_\CM(X,L)$ and apply the semi-stable reduction theorem \ref{thm::ESSR} to the family $(X,L)$. Since $pr_X:X \times_{\pi_d}\mathbb C \to X$ is a finite map, then $pr_X^*L$ is ample; moreover by assertion \ref{item::base} of Proposition \ref{prop::CM_properties} we get $\lambda_\CM(X \times_{\pi_d}\mathbb C, pr_X^*L) = \pi_d^*\lambda_\CM(X,L)$, thus 
by Proposition \ref{prop::CM_continuity} on the central fibres we have \begin{equation}\label{eq::base_change} F((X \times_{\pi_d}\mathbb C)_0, (pr_X^*L)_0) = d\, F(X_0,L_0),\end{equation} 
where $d$ is the degree of the base change in the reduction.

On the other hand, denoted by $E$ the exceptional divisor of $\beta$, the line bundle  $L'(r)=\beta^*pr_X^*L^r(-E)$ on $X'$ is relatively ample for $r$ big enough and $E$ is trivial outside form central fibre, thus $(X',L'(r)) \to \mathbb C$ is a test configuration for the original polarized manifold. By Corollary \ref{continue} we can approximate the Donaldson-Futaki invariant of the line bundle pulled-back via $\beta$ \begin{equation}\label{eq::blow_up} F(X'_0,L'(r)_0) = F(X'_0, (\beta^*pr_X^*L)_0) + O\left( \frac{1}{r} \right) \quad \mbox{as } r \to \infty,\end{equation} but finally we observe that $$ \beta: (X',\beta^*pr_X^*L) \to (X \times_{\pi_d}\mathbb C, pr_X^*L)$$ satisfies the hypothesis of Proposition \ref{prop::dominate_TC}. Thanks to \eqref{eq::blow_up} the theorem follows. Indeed, if we have $ F(X_0',L'_0) > d F(X_0,L_0)$, then the test configuration $(X', L'(r)) \to \mathbb C$ satisfies \eqref{genmain1} for $r \gg 0$. Otherwise, if $ F(X_0',L'_0) = d F(X_0,L_0)$, then for all $\varepsilon > 0$ there exists $r_0(\varepsilon)>0$ such that $F(X_0',L'(r)_0) > d\, F(X_0,L_0)- \varepsilon$ for all $r>r_0(\varepsilon).$
\end{proof}

\begin{proof}[Proof of Theorem \ref{thm::zerofut}]
To prove \eqref{basech} it is enough to perform a base change of some degree $d$ of the original test configuration, which has the effect of multiplying by $d$ the Futaki invariant.

The second assertion follows from the same argument as in the proof of Theorem \ref{genmain} using the third assertion of Corollary \ref{cor::control_Futaki}.
\end{proof}

As mentioned in the introduction, in order to prevent  the Futaki invariant from arbitrarily increasing in trivial ways, one needs to introduce a certain  
normalization on the space of test configurations. This was achieved first by Futaki-Mabuchi \cite{fm} in the smooth case, then by Sz\'ekelyhidi \cite{Sze06} and
Donaldson \cite{do2} for general schemes, by defining a norm of a test configuration $(X,L)\to \mathbb C$ of an $n$-dimensional smooth polarized manifold $(M,A)$ as follows: $$\| X_0,L_0 \| = \sqrt{\frac{Q}{a_0}-\frac{b_0^2}{a_0^2}} \,\, ,$$

where $Q$ is the leading coefficient of the expansion in $k$
of $\tr A_k^2$ (see definition \ref{def::DF}) referred of course to
the action on the central fiber $(X_0,L_0)$. Then we look at $$ \Psi(X,L) = \sqrt[n]{a_0 \vol(M,A)^{\frac{n-2}{2}}} \frac{F(X_0,L_0)}{\| X_0,L_0\|},$$ where of course $\vol(M,A)= e^{-n}a_0$ if $e$ is the exponent of the test configuration.

\begin{prop}
The function $\Psi$ is invariant by base change $t \mapsto t^d$. In the situation of Proposition \ref{prop::dominate_TC}, we have $$
\Psi(X',L') \geq \Psi(X,L),$$ with strict inequality if and only if the support of $\beta_*(\mathcal O_{X'})/\mathcal O_X$ has codimension one.
\end{prop}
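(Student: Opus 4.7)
The plan is to treat the two assertions separately, first by direct calculation on the central fibre and then by extending the argument used to prove Proposition \ref{prop::dominate_TC}. For the base change, I would observe that under $z\mapsto z^d$ the central fibre $(X_0,L_0)$ is unchanged as a scheme-with-line-bundle, but the $\mathbb{C}^*$-action on it is composed with the $d$-fold cover of $\mathbb{C}^*$. Hence the infinitesimal generator $A_k$ of the action on $\bigoplus_j(-1)^j H^j(X_0,L_0^k)$ is scaled by $d$, giving $\tr(A_k^p)\mapsto d^p\,\tr(A_k^p)$ while $\chi(X_0,L_0^k)$ is unchanged. Reading off the asymptotic expansions in Definition \ref{def::DF}, the $a_i$ are fixed, $b_i\mapsto d\,b_i$, $Q\mapsto d^2Q$; therefore $F\mapsto dF$ and $\|X_0,L_0\|\mapsto d\,\|X_0,L_0\|$, so their quotient is invariant. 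The prefactor $\sqrt[n]{a_0\,\vol(M,A)^{(n-2)/2}}$ depends only on the polarized manifold $(M,A)$, so the invariance of $\Psi$ follows.

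For the inequality I would mimic the proof of Proposition \ref{prop::dominate_TC}. The projection formula yields $Rf'_*(L'^k)=Rf_*(R\beta_*\mathcal{O}_{X'}\otimes L^k)$, and normality of $X'$ forces $R^q\beta_*\mathcal{O}_{X'}$ to have support of codimension at least two in $X$ for every $q\geq 1$. The short exact sequence $0\to\mathcal{O}_X\to\beta_*\mathcal{O}_{X'}\to\mathcal{Q}\to 0$ then isolates the new contribution coming from $\mathcal{Q}$. Using the Knudsen--Mumford property $Q_{(r)}$ together with the depth considerations at $0\in\mathbb{C}$, exactly as in the proof of Proposition \ref{prop::dominate_TC}, one obtains $a'_i=a_i$ for $i=0,1$, $b'_0=b_0$, and $b'_1=b_1+a$, with $a\geq 0$ and $a>0$ precisely when $\supp\mathcal{Q}$ has codimension one in $X$. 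Consequently $F(X'_0,L'_0)-F(X_0,L_0)=a/a_0$, non-negative and strictly positive exactly in the codimension-one case.

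The step I expect to be the main obstacle is the analogous analysis for the leading coefficient $Q$ of $\tr(A_k^2)$, since $Q$ is not the weight of any single determinant line bundle but rather the second moment of the equivariant character. My plan is to track the coefficient of $s^2$ in the generating function $\tr(e^{sA_k})$ through the same projection-formula setup: the extra contributions from $\mathcal{Q}$ and from the higher $R^q\beta_*\mathcal{O}_{X'}$ live on subschemes whose dimensions are controlled by the same codimension/depth bounds, and a careful accounting should show that they contribute at order $O(k^{n+1})$, one order below the leading $Q k^{n+2}$. Granting this, $Q'=Q$ and hence $\|X'_0,L'_0\|=\|X_0,L_0\|$. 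Combined with the invariance of $a_0$ and of the prefactor, this gives
\[ \Psi(X',L')-\Psi(X,L)=\sqrt[n]{a_0\,\vol(M,A)^{(n-2)/2}}\,\frac{a/a_0}{\|X_0,L_0\|}\geq 0, \]
with strict inequality if and only if $\supp(\beta_*\mathcal{O}_{X'}/\mathcal{O}_X)$ has codimension one.
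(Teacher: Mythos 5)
Your argument follows the paper's proof essentially verbatim: the base-change part is identical ($A_k \mapsto d\,A_k$ scales both $F(X_0,L_0)$ and $\|X_0,L_0\|$ by $d$, leaving $\Psi$ fixed), and for the inequality the paper likewise reduces everything to Proposition \ref{prop::dominate_TC} after observing that $\|X_0,L_0\|$ depends only on the leading coefficients of $\chi(X_0,L_0^k)$, $\tr A_k$ and $\tr A_k^2$, which are unchanged under $\beta$. The step you flag as the main obstacle --- that the leading coefficient $Q$ of $\tr A_k^2$ is preserved because the extra contributions live on supports of positive codimension --- is asserted in the paper with no more justification than your sketch provides, so your proposal matches the paper's argument in both route and level of detail.
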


\begin{proof}
The base change $t \to t^d$ (with $d>0$ necessarily) transforms $A_k$ to $d\, A_k$, whence
$\|X_0,L_0\|$ changes to $d\,\|X_0,L_0\|$, and $F(X_0,L_0)$ to $d\, F(X_0,L_0)$.
On the other hand, since $\|X_0,L_0\|$ depends only on leading coefficients of polynomials $\chi(X_0,L_0^k)$, $\tr A_k$, and $\tr A_k^2$, thanks to the proof of Proposition \ref{prop::dominate_TC}, it is unchanged under $\beta$. Thus the statement follows from Proposition \ref{prop::dominate_TC}.
\end{proof}

In light of the Proposition above, the proof of Theorem \ref{lb} is
reduced to a straightforward exercise.

\end{document}